\DeclareMathOperator{\Char}{{char}}
\newcommand{\rar}[1]{\stackrel{#1}{\longrightarrow}}
\newcommand{\bC}{{\mathbb C}}
\newcommand{\bG}{{\mathbb G}}
\newcommand{\bP}{{\mathbb P}}
\newcommand{\bQ}{{\mathbb Q}}
\newcommand{\bZ}{{\mathbb Z}}
\newcommand{\cB}{{\mathcal B}}
\newcommand{\cC}{{\mathcal C}}
\newcommand{\cD}{{\mathcal D}}
\newcommand{\cE}{{\mathcal E}}
\newcommand{\cG}{{\mathcal G}}
\newcommand{\cH}{{\mathcal H}}
\newcommand{\cM}{{\mathcal M}}
\newcommand{\cO}{{\mathcal O}}
\newcommand{\cP}{{\mathcal P}}
\newcommand{\cQ}{{\mathcal Q}}
\newcommand{\cR}{{\mathcal R}}
\newcommand{\cS}{{\mathcal S}}
\newcommand{\cT}{{\mathcal T}}
\newcommand{\cU}{{\mathcal U}}
\newcommand{\cV}{{\mathcal V}}
\newcommand{\ok}{{\overline k}}
\newcommand{\eff}{\text{eff}}
\newcommand{\et}{\text{et}}
\newcommand{\gm}{\text{gm}}
\newcommand{\Ho}{\text{Ho}}
\newcommand{\tri}{\text{tri}}
\newcommand{\trunc}{\text{trunc}}
\newcommand{\en}{\text{en}}
\newcommand{\gal}{\text{gal}}
\newcommand{\pretr}{\text{pretr}}
\newcommand{\w}{\text{w}}
\newcommand{\nc}{\newcommand}
\nc\wh{\widehat}
\nc\on{\operatorname}
\nc\Gr{\on{Gr}}
\nc\Fl{\on{Fl}}
\newcommand{\limto}{{\displaystyle\lim_{\longrightarrow}}}
\newcommand{\rightlim}{\mathop{\limto}}
\newcommand{\leftlim}{\mathop{\displaystyle\lim_{\longleftarrow}}}
\newcommand{\limfromn}{\leftlim\limits_{\raise3pt\hbox{$n$}}}
\newcommand{\limton}{\rightlim\limits_{\raise3pt\hbox{$n$}}}
\newcommand{\rightlimit}[1]{\mathop{\lim\limits_{\longrightarrow}}\limits%
                    _{\raise3pt\hbox{$\scriptstyle #1$}}}
\newcommand{\leftlimit}[1]{\mathop{\lim\limits_{\longleftarrow}}\limits%
                    _{\raise3pt\hbox{$\scriptstyle #1$}}}
\newcommand{\iso}{\buildrel{\sim}\over{\longrightarrow}}
\newcommand{\liso}{\buildrel{\sim}\over{\longleftarrow}}
\newcommand{\mono}{\hookrightarrow}
\DeclareMathOperator{\spec}{{spec}}
 \DeclareMathOperator{\Ext}{{Ext}}
\DeclareMathOperator{\Hom}{{Hom}}
\DeclareMathOperator{\Mor}{{Mor}}
\DeclareMathOperator{\LAlb}{{LAlb}}
\DeclareMathOperator{\T}{{T}}
\DeclareMathOperator{\Gal}{{Gal}}
\DeclareMathOperator{\cone}{{cone}}
\newcommand{\Rmnum}[1]{\expandafter\@slowromancap\romannumeral #1@}
\newtheorem{Th}{Theorem}
\newtheorem{pr}{Proposition}[section]
\newtheorem{lm}[pr]{Lemma}
\theoremstyle{definition}
\newtheorem{rem}[pr]{Remark}
\numberwithin{equation}{section}
\newcommand{\tr}{\operatorname{tr}}
\begin{document}

\title{Some applications of weight structures of Bondarko}

\author{Vadim Vologodsky}

\address{Department of Mathematics, University of Oregon, Eugene, OR, 97403, USA}
\email{vvologod@uoregon.edu}



\keywords{Algebraic cycles, Voevodsky motives, Hodge theory.}

\subjclass[2000]{Primary 14F42, 14C25;  Secondary 14C22, 14F05.}


\begin{abstract}
We construct a functor from the triangulated category of Voevodsky motives to a certain derived category of mixed Hodge structures enriched with integral weight filtration.
We use this construction to prove a strong integral version of the Deligne conjecture on 1-motives which was known previously  only up to isogeny (\cite{bk}).   
\end{abstract}

\maketitle

\section{Introduction}
\subsection{Enriched Hodge structures.}\label{int.enr.hodge} Let $DM^{\eff}_{\gm}(\bC; \bZ)$ be the triangulated  category of Voevodsky's motives over $\bC$ with integral coefficients. One has the realization functor (\cite{hu1}, \cite{hu2}, \cite{dg}, \cite{vol})
\begin{equation}\label{hu}
R^{Hodge}_\bZ:  DM^{\eff}_{\gm}(\bC; \bZ) \to  D^b(MHS^\bZ )
\end{equation}
to the bounded derived category of polarizable mixed $\bZ$-Hodge structures. Recall that
a mixed $\bZ$-Hodge structure consists of a finitely generated abelian group $V_\bZ$ together with a weight filtration on the {\it rational} vector space $V_\bQ= V_\bZ \otimes \bQ$
and a Hodge filtration on the $\bC$-vector space $V_\bC$ satisfying certain compatibility conditions.  On the other hand, it has been shown by Gillet and Soul\'e (\cite{gs}) that the integral
homology of every complex algebraic variety carries a canonical {\it integral} weight filtration.  In  this paper we  construct  a realization functor
from the triangulated  category of Voevodsky's motives to a certain derived category of mixed Hodge structures with the weight filtration defined {\it integrally}. Our construction is based on the
notion of {\it  weight structure}  on a triangulated category developed by Bondarko (\cite{bon1}, \cite{bon3}).  The above realization functor is a key ingredient in our approach to the Deligne conjecture on 1-motives, which is the subject of the second part of this paper.

Let $A$ be a subring of $\bQ$.
We define an {\it enriched mixed $A$-Hodge structure}  
$V= (W_\cdot \subset V_A, F^{\cdot} \subset V_\bC )$ 
to be a finitely generated $A$-module $V_A$ together with a weight filtration on $V_A$  and a Hodge filtration on  $V_\bC$  such that
the triple  $(V_\bZ,  W_\cdot  \otimes \bQ, F^{\cdot} )$ constitutes a mixed Hodge structure in the sense of Deligne.
A morphism $f$ between enriched mixed $A$-Hodge structures is just a morphism between the underlying $A$-modules which is compatible  with the weight and the Hodge filtrations. We note that, in general, a morphism between enriched mixed $A$-Hodge structures is not {\it strictly} compatible with the weight filtration. As a result,  the category of  enriched mixed $A$-Hodge structure $MHS^{A, \en}$ is not abelian. However,  $MHS^{A, \en}$ 
carries a natural exact structure. We postulate that a sequence 
$$0\to V^0 \to V^1 \to V^2 \to 0$$ is exact if, 
for every $i$,  the sequence of pure integral Hodge structures   
$$0\to  \on Gr^i _W V^0 \to \on Gr^i _W V^1 \to \on Gr^i _W V^2\to 0$$ {\it splits}.
An enriched Hodge structure is called {\it effective} if $F^1=0$, $W_0=V_A$ and $\on Gr^0_W V_A$ is torsion free;
$V$ is called  polarizable if $(V_A,  W_\cdot  \otimes \bQ, F^{\cdot} )$ is polarizable. We denote the full subcategory of enriched effective polarizable mixed Hodge structures by 
$MHS^{A, \en}_{\eff}$. The exact structure on $MHS^{A, \en}$ induces an exact structure on  $MHS^{A, \en}_{\eff}$. 
  Let  $D^b(MHS_{\eff}^{A, \en})$ be the corresponding bounded derived category (\cite{n}).
 We lift the functor  $R^{Hodge}_A$ to a triangulated functor
  \begin{equation}\label{r.hodge}
 \hat R^{Hodge}_A:    DM^{\eff}_{\gm} (\bC;  A)     \to    D^b(MHS_{\eff}^{A, \en}).
   \end{equation}
      
 \subsection{Weight structures.}\label{int.weight.str} The main ingredients involved in the definition of (\ref{r.hodge}) are a canonical weight structure and a DG structure on the triangulated category of Voevodsky motives (\cite{bon2}, \cite{bv}) and the following
  general construction inspired by  (\cite{bon3}, \S 6). Let $\cC$ be a pretriangulated DG category\footnote{A ``DG glossary'' can be found in \S \ref{s.dgcat}.} together with a weight structure on the corresponding triangulated category $\cC^{\tri}$. Recall from \cite{bon3} that a {\it weight structure} on $\cC^{\tri}$ is just
  a class of objects $\cP\subset \cC^{\tri}$ closed under finite direct sums and direct summands such that  $\cC^{tri}$ is generated by $\cP$ and such that, for every $X, Y\in \cP$, we have\footnote{Though Bondarko's original definition of a (bounded) weight structure is different it is {\it a posteriori} equivalent to ours (\cite{bon3}, Theorem 4.3.2). The subcategory $\cP\subset \cC^{\tri}$ is the {\it heart}
  of a weight structure in the sense of  \cite{bon3}.}
      $$\Hom_{\cC^{\tri}}(X,Y[i])=0,  \, \text{for every} \, i>0.$$    
   Viewing $\cP$  as a full DG subcategory of $\cC$, we have that the complex
      $\Hom_\cC(X,Y)$ is acyclic in positive degrees. Let  $\cC_{\trunc} $ be the DG category having the same objects as  $\cC$ and whose complexes of morphisms $\Hom_{\cC_{\trunc}}(X,Y)$ are the canonical truncations of  $\Hom_{\cC}(X,Y)$ in degree $0$, and let $\cC^{\natural}$ be the pretriangulated completion of  $\cC_{\trunc}$. We have a canonical quasi-functor $can: \cC^{\natural} \to \cC$. In Proposition \ref{prweightmc} we explain how a weight structure $\cP$ on  $\cC^{\tri}$ determines  a section of $can$ 
      $$Q_\cP:  \cC \to \cC^{\natural}, \quad can\circ Q_\cP= Id.$$
    It follows that, for any pretriangulated DG category $\cC'$ and quasi-functor $R: \cC \to \cC'$, a weight structure on  $\cC^{\tri}$ yields 
    a commutative diagram
\begin{equation}\label{intro.mainweight}
\xymatrix{
\cC  \ar[dr]_{R}  \ar[r]^{\hat R} & \cC^{\prime \natural}  \ar[d]^{} \\
& \cC',\\
}
\end{equation}
where $\hat R$ is the composition
$$\hat R:  \cC \rar{Q_\cP} \cC^{\natural}\rar{R^{\natural}}\cC^{\prime \natural}.$$
We apply the above constriction to the DG category   $D\cM^{\eff}_{\gm} (k, A)$  of Voevodsky motives over a perfect field $k$ with coefficients in a commutative ring $A$ (\cite{bv}).  Its homotopy category $DM^{\eff}_{\gm} (k, A)$
is the triangulated category Voevodsky motives.
 Let $\cP \subset  DM^{\eff}_{\gm} (k;  A)$ consist of those motives that are isomorphic to direct summands of motives of smooth projective varieties.  It is proven in (\cite{bon2}) that if either $k$ is a field of characteristic $0$ or $\Char k$ 
  is invertible in $A$, then $\cP$ is a weight structure on $DM^{\eff}_{\gm} (k;  A)$.   In \cite{vol}, we constructed  a DG quasi-functor 
 $$D\cM^{\eff}_{\gm}(\bC; A) \to  D^b_{dg}(MHS^A_{\eff} ), \quad A\subset \bQ$$
 to the DG derived category of effective polarizable mixed $A$-Hodge structures which lifts (\ref{hu}). 
 Applying (\ref{intro.mainweight}) to this quasi-functor we get
 \begin{equation}\label{intro.mainconstrwe}
    D\cM^{\eff}_{\gm}(\bC; A) \to D^b_{dg}(MHS^A_{\eff} )^\natural.
   \end{equation} 
  If $\cB$  is an abelian category of homological dimension $\leq 1$  we identify  $D^b_{dg}(\cB )^\natural $ with the derived DG category $D^b_{dg}(F\cB )$ of the category\footnote{equipped with
  a certain peculiar exact structure} of filtered objects of $\cB$ (Prop. \ref{bisfiltr}).  Finally, we check that  (\ref{intro.mainconstrwe}) factors through the derived DG category of enriched Hodge structures $D^b_{dg}(MHS_{\eff}^{A, \en})$, which is a full subcategory
  of $D^b_{dg}(FMHS_{\eff}^{A})$. This gives (\ref{r.hodge}).

 \subsection{Motivic Albanese functor.}\label{int.mot.alb.} The derived category  $D^b(  \cM_1(k, \bQ))$ of 1-motives up to isogeny over a perfect field $k$ has been studied mainly by Barbieri-Viale and Kahn (\cite{bk}), with some contribution of Vologodsky  (\cite{vol}).
 One of the main results of this theory (\cite{bk}, Th. 6.2.1) asserts that the embedding
  $$D^b(\cM_1(k, \bQ))\mono DM^{\eff}_{\gm}( k ; \mathbb{Q})$$
admits a left adjoint
 $$\LAlb_\bQ:   DM^{\eff}_{\gm}( k ; \mathbb{Q}) \to D^b(\cM_1(k, \bQ)), $$
 called the {\it motivic Albanese functor}. 
 For $k\subset \bC$ the motivic Albanese functor fits into 
   the following
 commutative diagram (\cite{vol}, Th. 3; {\it cf.} \cite{bk}, Th. 17.3.1)
\begin{equation}\label{ratalbanese}
\def\normalbaselines{\baselineskip20pt
\lineskip3pt  \lineskiplimit3pt}
\def\mapright#1{\smash{
\mathop{\to}\limits^{#1}}}
\def\mapdown#1{\Big\downarrow\rlap
{$\vcenter{\hbox{$\scriptstyle#1$}}$}}
\begin{matrix}
DM^{\eff}_{\gm}( k ; \mathbb{Q})  & \rar{\LAlb_\bQ}  &D^b(\cM_1(k, \bQ))  \cr
  \mapdown{\hat R^{Hodge}_\bQ } &   &\mapdown{\T^{Hodge}_\bQ }  \cr
 D^b(MHS_{\eff}^{\bQ})   &\rar{\overline{\LAlb}_\bQ } &  D^b( MHS_1^{\bQ}),
\end{matrix}
 \end{equation}
 where $MHS_1^{\bQ}$ is the full subcategory of $MHS_{\eff}^{\bQ}$ consisting of mixed Hodge structures with
 possibly non-zero Hodge numbers in the set $\{(0,0),(0,-1),(-1, 0),(-1,-1)\}$, 
  $\T^{Hodge}_\bQ$ is the Hodge realization functor defined by Deligne (\cite{d3}), and the Hodge Albanese functor $\overline{\LAlb}_\bQ $ is left adjoint to the embedding
 $$D^b( MHS_1^{\bQ}) \mono  D^b(MHS_{\eff}^{\bQ}).$$
Our goal in this paper is to explain an integral version of the above results.  What are the main difficulties in the integral case? A first problem is that none of the categories $\cM_1(k)$, $MHS_1^{\bZ}$ is abelian\footnote{By definition, the category 
$MHS_1^{\bZ}$ consists of torsion free mixed $\bZ$-Hodge structures $V$ such that $V\otimes \bQ$ is in $MHS_1^{\bQ}$. The functor $\T^{Hodge}_\bZ:  \cM_1(\bC) \to MHS_1^{\bZ}$ is an equivalence of categories (\cite{d3}, \S 10.1).}. 
However,  there are natural exact structures one can put on these categories.  Given a subring $A\subset \bQ$ we view the category  $MHS_1^{A}$ as a full subcategory of the exact category $MHS_{\eff}^{A, \en}$ and consider the induced exact structure on  $MHS_1^{A}$.  
The corresponding derived category is denoted by $D^b(MHS_1^{A})$.
In Proposition \ref{hadjoint}
we prove that the embedding
 $$  D^b(MHS_1^{A}) \mono D^b(MHS_{\eff}^{A, \en})$$
admits a left adjoint $\overline{\LAlb}_A $.

Next, for any perfect field $k$, we consider an exact structure on  $\cM_1(k, A)$ postulating 
that a sequence  
\begin{equation}\label{intro.seq}
0\to M^0 \to M^1 \to M^2 \to 0
\end{equation}
is exact if  the sequence of pure 1-motives  over an algebraic closure $k\subset \ok$   
$$0\to  \on Gr^i _W f^*_\ok M^0 \to \on Gr^i _W f^*_\ok M^1 \to \on Gr^i _W f^*_\ok M^2\to 0, \quad i=0,-1, -2 $$ {\it splits}.
Here $f^*_\ok: \cM_1(k, A) \to \cM_1(\ok, A)$ is the pullback functor. If $k\subset \bC$ this condition is equivalent to the exactness of the sequence 
$$0\to \T^{Hodge}_A f^*_\bC M^0 \to \T^{Hodge}_A f^*_\bC M^1\to \T^{Hodge}_A f^*_\bC M^2\to 0$$
of enriched mixed Hodge structures.
In \S \ref{s.embedd} we construct a triangulated functor
$$\iota: D^b(\cM_1(k, A))\to DM^{\eff}_{\gal}(k;  A)$$ 
  from the derived category of 1-motives to the category of Galois motives $DM^{\eff}_{\gal}(k;  A)$, which is defined to be the quotient of Voevodsky's category $ DM^{\eff}(k;  A)$ by the full subcategory consisting of motives whose pullback
  in  $DM^{\eff}(\ok;  A)$ is $0$. $\Ext$ groups in the category  $DM^{\eff}_{\gal}(k;  A)$ can be expressed using Galois cohomology (see \S \ref{s.gm}).
  We do not know if $\iota$ is fully faithful, in general. Nevertheless,  
  in Theorem \ref{th2}, we prove if $k$  is  either a field of characteristic $0$ or $\Char k$ is invertible in $A$, then
  $\iota$ admits a left adjoint
    $$\LAlb_A:   DM^{\eff}_{\gm, \gal} (k;  A) \to D^b({\cM}_1(k; A)),$$
     $$\Hom( \LAlb_A(V), M)\iso \Hom(V, \iota(M)), \quad M\in D^b({\cM}_1(k; A)), \, V\in DM^{\eff}_{\gm, \gal}(k;  A),$$
    where $DM^{\eff}_{\gm, \gal}(k;  A)$ is the idempotent completion of the image of $DM^{\eff}_{\gm} (k;  A)$ in $DM^{\eff}_{\gal} (k;  A)$.
    More generally, for every perfect field $k$ and any $A$,  the motivic Albanese functor is defined  on the triangulated subcategory of $DM^{\eff}_{\gm, \gal}(k;  A)$ generated by Chow motives (Remark \ref{integralincharp}).

\begin{rem}\label{bkalb} The category $MHS_1^{A}$ has a stronger exact structure ({\it i.e.},  with more exact sequences) induced by its embedding into the  abelian category $MHS_{\eff}^{A}$. The corresponding
exact structure on the category of 1-motives can be defined geometrically, at least for every field $k$ of characteristic $0$ (\cite{bk}, \S 1.4).  The derived category of 1-motives equipped with this exact structure (which we denote, for the purpose of this paper, by $D^b(\cM_1^{\et}(k, A))$) has been studied in \cite{bk}. In particular, it is shown there how  $D^b(\cM_1^{\et}(k, A))$ can be realized as a full subcategory of  the category of  \'etale Voevodsky motives  $DM^{\eff}_{\gm, \et}( k ; A)$. Unfortunately, unless $A=\bQ$, the embedding $D^b(\cM_1^{\et}(k, A))\mono DM^{\eff}_{\gm, \et}( k ; A)$ does not admit a left adjoint (\cite{bk}, \S 5.2.2). However, in \cite{bk},
  Barbieri-Viale and Kahn using an {\it ad hoc} construction produced a functor from $ DM^{\eff}_{\gm} (k;  A)$ to the category $D^b(\cM_1^{\et}(k, A))$, which they also call the motivic Albanese functor.
    It can be shown that the Barbieri-Viale--Kahn functor is the composition of $\LAlb_A$ with the projection  $D^b(\cM_1(k, A)) \to D^b(\cM_1^{\et}(k, A))$.    
    \end{rem}
    
  Finally, for $k\subset \bC$ and $A\subset \bQ$,  we construct in Theorem \ref{mth} a commutative  
  diagram 
\begin{equation}\label{intro.delconj}
\def\normalbaselines{\baselineskip20pt
\lineskip3pt  \lineskiplimit3pt}
\def\mapright#1{\smash{
\mathop{\to}\limits^{#1}}}
\def\mapdown#1{\Big\downarrow\rlap
{$\vcenter{\hbox{$\scriptstyle#1$}}$}}
\begin{matrix}
DM^{\eff}_{\gm}( k ; A)  & \rar{\LAlb_A}  &D^b(\cM_1(k, A))  \cr
  \mapdown{\hat R^{Hodge}_A } &   &\mapdown{\T^{Hodge}_A }  \cr
 D^b(MHS_{\eff}^{A, \en})   &\rar{\overline{\LAlb}_A } &  D^b( MHS_1^{A}).
\end{matrix}
 \end{equation}

This assertion can be viewed as a {\it derived} version of the Deligne conjecture on 1-motives ({\it cf.} \cite{bk}, \S 17.3). Its relation to the ``classical'' Deligne conjecture is explained in Remark \ref{clasdelconj}.

 
 {\bf Acknowledgments.}  I would like to thank Leonid Positselski for helpful conversations related to the subject of this paper.  Special thanks go to the referees for careful reading the first draft of the paper
and for their numerous remarks and suggestions.  

\section{Some Homological Algebra.} \label{h.a}
\subsection{Generalities.}\label{s.dgcat} We begin by recalling some generalities on the {\it homotopy theory} of DG categories. The references are made to (\cite{t}, \cite{dri}, \cite{k}). 
Let $\cC$ be a DG category over a commutative ring $A$. Thus morphisms between objects of $\cC$ are complexes of $A$-modules
and composition maps are morphisms of complexes. 
The homotopy category $\Ho\, \cC$ has the same objects as $\cC$, and morphisms
$$\Hom_{\Ho\, \cC}(X,Y):= H^0\Hom_\cC(X,Y).$$
A DG functor  $F: \cC\to \cC'$  is called a homotopy equivalence (or quasi-equivalence) if for every $X,Y \in \cC$ the induced morphism of complexes $\Hom_\cC(X,Y)\to \Hom_\cC(F(X),F(Y))$
is a quasi-isomorphism and the functor $Ho\, F:   Ho\, \cC \to Ho\, \cC'$ is essentially surjective (and, therefore, by the first condition, an equivalence of categories).
The localization of the category of small DG categories with respect to quasi-equivalences is denoted by $Hqe$.  We refer
to $Hqe$ as {\it the homotopy category of small DG categories}. 

 The category $Hqe$ carries a symmetric monoidal structure 
 $$-\otimes^L -:  Hqe \times Hqe \to Hqe. $$
 defined as follows (\cite{t}, \S 4). If $\cC$  and $\cC'$ are DG categories and either $\cC$ or $\cC'$ is $A$-flat\footnote{Recall that a DG category $\cD$ is called $A$-flat if the functor $\Hom_\cD(X,Y) \otimes ?$ preserves quasi-isomorphisms for all $X,Y$ of $\cD$.} the derived tensor product $\cC\otimes ^L \cC'$ is just the usual tensor product $\cC\otimes  \cC'$ of DG categories {\it i.e.},  objects of $\cC\otimes  \cC'$ are pairs
 $(X, X')$, $X\in \cC$, $X' \in \cC'$,  and
 $$\Hom_{\cC\otimes  \cC'}((X, X'), (Y, Y')): = \Hom_\cC(X,Y)\otimes _A \Hom_{\cC}(X',Y').$$
 In general, for every DG category $\cC$ one finds an $A$-flat DG category $Q(\cC)$ with a quasi-equivalence $Q(\cC)\to \cC$ and sets
  $$\cC\otimes ^L \cC' : = Q(\cC)\otimes \cC'.$$
 In (\cite{t}, Theorem 6.1), Toen proved that  the symmetric monoidal  category $Hqe$ admits an internal Hom-functor, $R\cH om$: for small DG categories $\cC$ and $\cC'$ the DG category   $R\cH om(\cC, \cC')$, viewed as an object of $Hqe$, is
 characterized by the property that, for every DG category $\cD$,  one has a functorial isomorphism
 $$\Hom_{Hqe}(\cD,  R\cH om(\cC, \cC')) \iso \Hom_{Hqe}(\cD\otimes ^L \cC, \cC').$$ 
We write $\cT(\cC, \cC')$ for the homotopy category of $R\cH om(\cC, \cC')$. Objects of $\cT(\cC, \cC')$ are called {\it DG quasi-functors}.
By the universal property of $R\cH om$ every quasi-functor $F\in  \cT(\cC, \cC')$ induces a genuine functor $\Ho\, F:   \Ho\, \cC \to Ho\, \cC'$ between the homotopy categories.

Small DG categories and DG quasi-functors form a $2$-category denoted by $DGcat$ (\cite{dri}, \S 16).  We refer
to $DGcat$ as {\it the homotopy 2-category of small DG categories}.


Having the 2-category $DGcat$ we define the notion of {\it adjoint DG quasi-functors}:  given $F \in \cT(\cC, \cC')$  a right  {\it adjunction datum} $(G, \nu, \mu)$ consists of 
 a quasi-functor $G\in \cT(\cC', \cC)$ together with  morphisms
 $\nu: Id \to G \circ F,$ $\mu: F \circ G \simeq Id $ such that 
 the compositions
$$ F \rar{F(\nu)}
 F\circ G \circ F
 \rar{\mu(F)}     F$$
$$ G \rar{\nu (G)}  
 G \circ F \circ G
 \rar{G(\mu) } G $$
are identity morphisms.

For a full subcategory $\cB \subset \cC\in DGcat $ {\it the DG quotient} $\cC/\cB$ is a small DG category equipped with a DG quasi-functor $\cC \to  \cC/\cB$ satisfying the following universal property: for every DG category $\cC'\in DGcat  $ the functor
\begin{equation}\label{dgquotient}
\cT(\cC/\cB , \cC') \to \cT(\cC, \cC')
\end{equation}
 is fully faithful embedding whose essential image consists of quasi-functors $F \in   \cT(\cC, \cC')$ such that $\Ho(F)(\Ho(\cB))=0$.\footnote{In particular, if $\cB$ is nonempty and $\Ho(\cC')$ does not have a zero object   $\cT(\cC/\cB , \cC')$ is empty.}
 A DG quotient  $\cC/\cB$ always exists (and unique up to a unique isomorphism in $DGcat $).  
 In particular, given an $A$-linear exact category  $\cE$  we define its  bounded derived DG category  $D^{b}_{dg}({\cE})$ to be the DG quotient  of the DG category $C^b_{dg}({\cE})$ of bounded complexes by the subcategory of acyclic ones (\cite{n}, \S 1). The homotopy category of $D^{b}_{dg}({\cE})$ is the usual bounded derived category $D^{b}({\cE})$. We will write  $D_{dg}({\cE})$ unbounded derived DG category of $\cE$.

 Given a small DG category  $\cC$ one defines its  {\it DG proj-completion} $\underleftarrow{\cC}$ to be 
 $$\underleftarrow{\cC}= R\cH om(\cC, D_{dg}(A-mod))^{op},$$
 where superscript $op$ stands for the opposite category\footnote{To avoid set-theoretical problems one should fix a universe $\cU$ such that
 $\cC$ is $\cU$-small and let  $D_{dg}(A-mod)$ be the derived DG category of $\cU$-small $A$-modules. Then
 $\underleftarrow{\cC}$ is $\cV$-small for some $\cU\in \cV$ ({\it cf.} \cite{t}, \S 3).}. 
 The dual notion, the {\it DG ind-completion}  $\underrightarrow{\cC}$ of $\cC$, is defined by 
  $$\underrightarrow{\cC}= (\underleftarrow{\cC ^{op}})^{op}  = R\cH om(\cC^{op}, D_{dg}(A-mod)).$$
 One has the Yoneda  quasi-functors 
 $$\cC\mono  \underleftarrow{\cC}, \quad \cC \mono \underrightarrow{\cC}$$
  that carry an object $X\in \cC$ to the DG functors $\Hom_\cC(X, ?)$ and $\Hom_\cC(?, X)$ respectively.  
  
   We refer the reader to (\cite{dri}, \S 14 or \cite{bv} \S 1.6) for a more explicit construction of $\underrightarrow{\cC}$ and $\underleftarrow{\cC}$. Let us just mention that another name for $(\underleftarrow{\cC})^{op}$  (resp.  $\underrightarrow{\cC}$)
 used in the literature is the {\it  derived DG category of left (resp. right) DG modules over $\cC$.}  
 Given a quasi-functor $F: \cC \to \cC'$ the restriction quasi-functor $F^*: \underleftarrow{\cC'}\to \underleftarrow{\cC}$
admits right and left adjoint ones, the derived induction and co-induction functors, (\cite{dri}, \S 14.12).
\begin{equation}\label{ad}
F_*, F_! :  \underleftarrow{\cC} \to \underleftarrow{\cC'}.
\end{equation}


For $X\in \cC$ its $n$-translation , $n\in \bZ$,  is an object  $X[n]$ of $\cC$ that represents the DG functor $Y \mapsto \Hom_\cC(Y, X)[n]$.
Similarly, for a closed morphisms $\phi: X\to X'$ of degree $0$, the cone of $\phi$ is an object $\cone(\phi)$ that represents the DG functor
 $$Y\mapsto \cone(\Hom_\cC(Y, X)\rar{\phi_*} \Hom_\cC(Y, X')).$$
We shall say that $\cC$ is a {\it strongly pretriangulated} DG category if $\cC$ has $0$ object and  $X[n]$, $\cone(\phi)$ exist for every 
$X$, $n$, and $\phi$. If $\cC$ is a strongly pretriangulated DG category its homotopy category $\Ho\, \cC$ has a natural structure of triangulated category and every DG functor $F: \cC\to \cC'$ between strongly pretriangulated DG categories induced a triangulated functor between the 
corresponding homotopy categories. 
For every DG category category $\cC$ its {\it pretriangulated completion}
is a strongly pretriangulated DG category
$\cC^{\pretr}$  together with a DG functor $\cC\to \cC^{\pretr}$ satisfying the following universal property: every DG functor $F: \cC\to \cC'$ 
to a strongly pretriangulated DG category factors uniquely through $\cC^{\pretr}$.  Every DG category has a pretriangulated completion
and we refer the reader to the original paper by  Bondal and Kapranov (\cite{boka}) or to (\cite{dri}, \S 2.4) for an explicit construction of $\cC^{\pretr}$.
Informally, $\cC^{\pretr}$ is obtained from
$\cC$   by adding to $\cC$ all cones, cones of morphisms between cones, etc.  For example, the pretriangulated completion
of an additive category $\cB$ viewed as a DG category is just the category of bounded complexes $C^b(\cB)$.
We shall say that $\cC$ is a {\it pretriangulated} DG category if $\cC \to \cC^{\pretr}$ is a homotopy equivalence. For a pretriangulated DG category $\cC'$ and any category $\cC$ the restriction functor
$$\cT(\cC^{\pretr} , \cC') \to \cT(\cC, \cC')$$
is an equivalence of categories.

\subsection{Truncation of a DG category.}\label{s.trunc}
Given a DG category $\cC$ we denote by $\cC_{\trunc} $ the DG category having the same objects as  $\cC$ and morphisms defined by the formula
$$Hom_{\cC_{\trunc}}(X,Y)= \tau_{\leq 0}\Hom_{\cC}(X,Y),$$
where $\tau_{\leq i}C^{\cdot}$ denotes the canonical truncation of a complex $C^{\cdot}$ in degree $i$.

We say that $\cC$ is {\it negative}  if for every objects $X,Y\in \cC$ the complex $\Hom_\cC(X,Y)$ is acyclic in positive degrees. 
\begin{pr}\label{propoftrunc} Let $\cC$ be a small negative DG category. 
Then, for every small DG category $\cC'$, the functor $\cC'_{\trunc}\to \cC'$ induces an equivalence of categories
$$ \cT(\cC, \cC'_{\trunc})\iso  \cT(\cC, \cC').$$
\end{pr}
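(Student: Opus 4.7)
The plan is to reduce to the case where $\cC$ itself has morphism complexes concentrated in non-positive degrees, choose a cofibrant resolution preserving this property, and then observe that DG functors out of such a DG category automatically factor through $\cC'_{\trunc}$ at the strict DG level.

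First, since $\cC$ is negative, the canonical inclusion $\cC_{\trunc}\to \cC$ is a quasi-equivalence: on each morphism complex, $\tau_{\leq 0}\Hom_\cC(X,Y)\to \Hom_\cC(X,Y)$ is a quasi-isomorphism because the target has no positive cohomology, and the map is bijective on $H^0$ so induces an equivalence on homotopy categories. Quasi-equivalences are isomorphisms in $Hqe$, so $\cT(\cC,\cD)\iso \cT(\cC_{\trunc},\cD)$ for every DG category $\cD$; this reduces the claim to proving
$$\cT(\cC_{\trunc},\cC'_{\trunc})\iso \cT(\cC_{\trunc},\cC').$$
Thus I may replace $\cC$ by $\cC_{\trunc}$ and assume $\Hom_\cC(X,Y)$ is concentrated in degrees $\le 0$.

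Second, choose a cofibrant (equivalently, semi-free $h$-projective) resolution $\tilde\cC\to \cC$ in Tabuada's model structure on $DGcat$ whose morphism complexes are again concentrated in degrees $\le 0$. Such a resolution can be built by the standard inductive cell-attachment argument: to resolve a morphism complex concentrated in non-positive degrees one only needs to adjoin free generators in non-positive degrees. For any target $\cD$ one then has $\cT(\cC,\cD)=\Ho\,\Fun_{DG}(\tilde\cC,\cD)$ by the defining property of $R\cH om$, where $\Fun_{DG}$ denotes strict DG functors and DG natural transformations. The key step is now that the canonical DG functor $\cC'_{\trunc}\to\cC'$ induces an \emph{isomorphism} (not merely a quasi-equivalence) of DG categories
$$\Fun_{DG}(\tilde\cC,\cC'_{\trunc})\iso \Fun_{DG}(\tilde\cC,\cC').$$
Indeed, given a DG functor $F:\tilde\cC\to\cC'$, each component $F_{X,Y}:\Hom_{\tilde\cC}(X,Y)\to \Hom_{\cC'}(F(X),F(Y))$ is a chain map whose source is concentrated in non-positive degrees; the image therefore lies in the subcomplex $\tau_{\leq 0}\Hom_{\cC'}(F(X),F(Y))=\Hom_{\cC'_{\trunc}}(F(X),F(Y))$ (using in degree $0$ that chain maps send cocycles to cocycles). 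The same reasoning applied to natural transformations upgrades this to an isomorphism on morphism complexes. Combined with the reduction of the first step, this yields the required equivalence.

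The main obstacle is the second step: producing a cofibrant replacement of $\cC_{\trunc}$ that stays concentrated in non-positive degrees. This is a genuinely model-categorical point, but manageable, as Tabuada's generating cofibrations allow one to prescribe the degrees in which cells are attached; for a DG category with no morphisms in positive degrees the entire semi-free resolution can be built from generators in non-positive degrees, and the remaining verifications (that the resulting $\tilde\cC$ is cofibrant and that the map to $\cC_{\trunc}$ is a quasi-equivalence) are routine.
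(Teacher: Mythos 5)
Your first reduction (replacing $\cC$ by $\cC_{\trunc}$ and then by a semi-free resolution with $\Hom$-complexes concentrated in degrees $\leq 0$) is exactly how the paper's proof begins, and your key observation — that every strict DG functor out of such a category lands in closed degree-zero morphisms and hence factors uniquely through $\cC'_{\trunc}$ — is also the paper's central point. The gap is the assertion that $\cT(\cC,\cD)=\Ho\,\Fun_{DG}(\tilde\cC,\cD)$ "by the defining property of $R\cH om$". The defining property is only the adjunction $\Hom_{Hqe}(\cD',R\cH om(\cC,\cD))\cong \Hom_{Hqe}(\cD'\otimes^L\cC,\cD)$; combined with the description of $\Hom_{Hqe}$ as homotopy classes of strict DG functors out of a cofibrant replacement, it identifies the \emph{isomorphism classes of objects} of $\cT(\cC,\cD)$, but it says nothing about morphisms. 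Morphisms of quasi-functors are computed by \emph{derived} natural transformations (a homotopy end, i.e.\ bimodule $\Ext$), and strict DG natural transformations do not compute these even when the source is semi-free: already for the semi-free category generated by a single arrow, a morphism of quasi-functors is a square commuting up to a specified homotopy, whereas a strict transformation must commute on the nose, and for a one-object source the derived transformation complex is Hochschild cochains rather than the center. So your argument yields essential surjectivity of $\cT(\cC,\cC'_{\trunc})\to\cT(\cC,\cC')$, but full faithfulness — the actual content of the equivalence — is not established. (A smaller inaccuracy: $\Fun_{DG}(\tilde\cC,\cC'_{\trunc})\to\Fun_{DG}(\tilde\cC,\cC')$ is \emph{not} an isomorphism on morphism complexes, since a natural transformation of positive degree has components of positive degree which cannot lie in $\tau_{\leq 0}$; it is bijective on objects and on $H^0$ of Hom's, which is all you could hope to use.)

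The paper closes precisely this hole without ever computing mapping complexes of quasi-functors. It proves the stronger statement that $R\cH om(\cC,\cC'_{\trunc})_{\trunc}\to R\cH om(\cC,\cC')_{\trunc}$ is an isomorphism in $Hqe$: one first establishes the bijection $\Hom_{Hqe}(\cC,\cC'_{\trunc})\cong\Hom_{Hqe}(\cC,\cC')$ by your factorization argument (this is where the semi-free resolution in non-positive degrees, via Drinfeld's Lemma 13.5, is used), and then tests against an arbitrary $A$-flat $\cD$ with $\cD_{\trunc}=\cD$, using the $\otimes^L$--$R\cH om$ adjunction to reduce to the same $\Hom_{Hqe}$-bijection for the negative category $\cD\otimes_A\cC$. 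Since the two truncated internal Homs again lie in the subcategory of non-positively graded categories, a Yoneda-type argument there gives the $Hqe$-isomorphism, and passing to $H^0$ gives the equivalence $\cT(\cC,\cC'_{\trunc})\iso\cT(\cC,\cC')$. If you want to save your route, you must either invoke (and verify the hypotheses of) a genuine theorem identifying $R\cH om$ with a strict functor category, or replace strict natural transformations by an argument of the adjunction/test-object type above.
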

\begin{proof}
We shall prove a stronger statement that the functor
\begin{equation}\label{eq57}
R\cH om(\cC, \cC'_{\trunc})_{\trunc} \to R\cH om(\cC, \cC')_{\trunc} 
\end{equation}
is an isomorphism in $Hqe$. Let us first check that the map
\begin{equation}\label{eq56}
\Hom_{Hqe}(\cC, \cC'_{\trunc})\to \Hom_{Hqe}(\cC, \cC')
\end{equation}
is a bijection. Indeed, since by our assumption the functor $\cC_{\trunc}\to \cC$ is a homotopy equivalence we may assume that the complexes  $\Hom_\cC(X,Y)$
are supported in non-positive degrees for all $X,Y$ of $\cC$. 
Moreover, using a construction from (\cite{dri}, Lemma 13.5) we may replace $\cC$ by a semi-free (and, therefore, cofibrant)
 DG category having the same property.
Then the sets  $\Hom_{Hqe}(\cC, \cC'_{\trunc})$ and $\Hom_{Hqe}(\cC, \cC')$ are in bijection with the sets of DG functors $\Hom (\cC, \cC'_{\trunc})$ and $\Hom(\cC, \cC')$
  modulo the homotopy relation (\cite{k}, \S 4.2). Finally, our assumption on $\cC$ implies that every DG functor $\cC\to \cC'$ factors uniquely through $\cC'_{\trunc}$.
  This proves (\ref{eq56}). To prove (\ref{eq57}) it is enough to check that for every $A$-flat DG category $\cD$ such that   $\cD_{\trunc}= \cD$ the map
$$\Hom_{Hqe}(\cD,  R\cH om(\cC, \cC'_{\trunc})_{\trunc}) \to \Hom_{Hqe}(\cD, R\cH om(\cC, \cC')_{\trunc})$$
is a bijection.  Using (\ref{eq56}) we reduce the problem to showing that the map
$$\Hom_{Hqe}(\cD,  R\cH om(\cC, \cC'_{\trunc})) \to \Hom_{Hqe}(\cD, R\cH om(\cC, \cC'))$$
is a bijection or, equivalently, that the map
$$\Hom_{Hqe}(\cD \otimes _A \cC, \cC'_{\trunc} ) \to \Hom_{Hqe}(\cD \otimes _A \cC, \cC' )$$
 has the same property, which is indeed the case by (\ref{eq56}).
\end{proof}
Using the proposition we define a functor
\begin{equation}\label{funct1}
 \cT(\cC, \cC')\iso  \cT(\cC_{\trunc}, \cC'_{\trunc})
\end{equation}
to be the composition of the restriction functor $\cT(\cC, \cC')\to \cT(\cC_{\trunc}, \cC')$ and the inverse to the equivalence $\cT(\cC_{\trunc}, \cC'_{\trunc}) \iso \cT(\cC_{trunc}, \cC')$.

The DG category $\cC_{\trunc}$ is not pretriangulated even if $\cC$ is. We denote by 
$\cC^\natural $ the pretriangulated completion of  $\cC_{\trunc}$. 
The formation of $\cC^\natural $ respects DG quasi-functors:  composing (\ref{funct1}) with $\cT(\cC_{\trunc}, \cC'_{\trunc})\to  \cT(\cC^\natural, \cC^{\prime \natural})$ we get a functor
\begin{equation}\label{funct2}
 \cT(\cC, \cC')\to  \cT(\cC^\natural, \cC^{\prime \natural}).
\end{equation}
Given a quasi-functor $R\in \cT(\cC, \cC')$ we shall denote its image in $\cT(\cC^\natural, \cC^{\prime \natural})$ by $R^\natural $.
\subsection{Weight structures.}\label{s.w.struc} This notion has been introduced by Bondarko (\cite{bon1}). We recall here one of the equivalent definitions of a weight structure given in (\cite{bon3}).   
 Let $\cC$ be a pretriangulated DG category, $\cC^{\tri}=\Ho\, \cC$ its homotopy category, and let $\cP\subset \cC$ be a full DG subcategory of $\cC$ (not necessary pretriangulated).   
 We say that $\cC$ is generated by $\cP$ if $\cC^{\tri}$ is the smallest triangulated subcategory of $\cC^{\tri}$ which contains $\Ho\, \cP$, or equivalently, if the DG quasi-functor $\cP^{\pretr}\to \cC$ is a quasi-equivalence.
  The subcategory $\cP$ is called homotopy idempotent complete in $\cC$ if any object of $\cC$ which is isomorphic in $\cC^{\tri}$
 to a direct summand of an object of $\Ho\, \cP $ lies in $\cP$. We say that $\cP$ is homotopy additive if $\Ho(\cP)$ is additive.
 
 A {\it weight structure}\footnote{Warning: in this paper we use the term  ``a weight structure'' for what is called ``a bounded weight structure''  in Bondarko's works (\cite{bon3}, Th. 4.3.2).} 
 on $\cC$ is a full DG subcategory $\cP\subset \cC$, which is negative ({\it i.e.}, for every objects $X,Y\in \cP$ the complex $\Hom_\cC(X,Y)$ is acyclic in positive degrees), homotopy idempotent complete, homotopy additive, 
 and which generates $\cC$.
 
 If $\cP'\subset \cC$ is a full DG subcategory  which is negative and which generates $\cC$ then there is a unique weight structure $\cP$ with  $\cP'\subset \cP$ (\cite{bon3}, Th. 4.3.2).
 
If $\cP$ is a weight structure on $\cC$ then, using Proposition \ref{propoftrunc},
the embedding $\cP\mono \cC$ lifts canonically to a DG quasi-functor $\cP\mono \cC_{\trunc}$, which, in turn,  extends to a quasi-functor $\cC\simeq \cP^{\pretr}\to \cC^\natural$.
We summarize this in the following proposition. 
\begin{pr}\label{prweightmc} A  weight structure $\cP$ on $\cC$ 
  determines  a DG quasi-functor $Q_\cP$ fitting into the commutative diagram
  \begin{equation}\label{weightdia1}
\xymatrix{
\cC  \ar[dr]_{Id}  \ar[r]^{Q_\cP} & \cC^{\natural}  \ar[d]^{can} \\
& \cC,\\
}
\end{equation}
 where $can:  \cC^{\natural}  \to \cC$ is induced by the functor $ \cC_{\trunc}  \to \cC$.
  \end{pr}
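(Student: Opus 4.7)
The plan is to execute the three-step construction sketched in the paragraph immediately preceding the statement. Since the weight structure $\cP$ is by hypothesis a negative DG subcategory, I would first apply Proposition \ref{propoftrunc} with its ``$\cC$'' taken to be $\cP$ and its ``$\cC'$'' taken to be $\cC$. This supplies an equivalence
$$\cT(\cP, \cC_{\trunc}) \iso \cT(\cP, \cC)$$
induced by the canonical DG functor $\cC_{\trunc} \to \cC$. The tautological inclusion $\iota: \cP \mono \cC$ is an object of the target, and I let $\tilde\iota \in \cT(\cP, \cC_{\trunc})$ denote its essentially unique preimage under this equivalence.

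Next, I compose $\tilde\iota$ with the tautological quasi-functor $\cC_{\trunc} \to \cC^\natural = (\cC_{\trunc})^{\pretr}$ to produce a quasi-functor $\cP \to \cC^\natural$. Because $\cC^\natural$ is pretriangulated by construction, the universal property of the pretriangulated completion recalled at the end of \S \ref{s.dgcat} extends this canonically to $\cP^{\pretr} \to \cC^\natural$. The assumption that $\cP$ generates the pretriangulated DG category $\cC$ is precisely the statement that the canonical quasi-functor $\cP^{\pretr} \to \cC$ is a quasi-equivalence; fixing a homotopy inverse in $Hqe$, I define
$$Q_\cP: \cC \iso \cP^{\pretr} \longrightarrow \cC^\natural.$$

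For commutativity, since $\cP^{\pretr} \iso \cC$ is a quasi-equivalence and $\cC$ is pretriangulated, the universal property of the pretriangulated completion makes the restriction functor $\cT(\cC, \cC) \to \cT(\cP, \cC)$ an equivalence of categories. It therefore suffices to exhibit an isomorphism between $can \circ Q_\cP$ and $\mathrm{Id}_\cC$ after restriction to $\cP$. By construction, $Q_\cP$ restricted to $\cP$ is the composition $\cP \xrightarrow{\tilde\iota} \cC_{\trunc} \to \cC^\natural$; since $can$ is itself the extension of the canonical functor $\cC_{\trunc} \to \cC$, the composite $can \circ Q_\cP$ restricted to $\cP$ is canonically isomorphic to $\cC_{\trunc} \to \cC$ precomposed with $\tilde\iota$, which by the very defining property of $\tilde\iota$ is the original embedding $\iota$. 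The latter coincides with the restriction of $\mathrm{Id}_\cC$ to $\cP$.

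The main point requiring care---rather than a serious obstacle---is that the entire construction lives in the homotopy 2-category $DGcat$: both $\tilde\iota$ and the homotopy inverse of $\cP^{\pretr} \iso \cC$ are defined only up to isomorphism in $Hqe$, so commutativity of the diagram must be verified through universal properties rather than by strict equalities of DG functors. Proposition \ref{propoftrunc} is exactly the tool that makes these choices coherent, and it is the key input of the argument.
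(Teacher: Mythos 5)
Your proposal is correct and follows exactly the paper's own argument: the paper's ``proof'' is the sentence preceding the proposition, namely lifting the embedding $\cP\mono\cC$ through $\cC_{\trunc}$ via Proposition \ref{propoftrunc} and extending to $\cC\simeq\cP^{\pretr}\to\cC^{\natural}$ by the universal property of the pretriangulated completion. Your additional verification of the commutativity of the diagram after restriction to $\cP$ (and your remark that everything is only defined up to isomorphism in $Hqe$) simply fills in details the paper leaves implicit.
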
     
 A weight structure on $\cC$ also determines a $t$-structure on the derived category $\cT(\cC, D_{dg}(A-mod))$ of DG modules over $\cC$.
 More generally, we have the following result.
  \begin{pr}\label{tstructure}
   Let $\cC$ be a DG category equipped with a weight structure $\cP\subset \cC$, $\cC'$ a pretriangulated DG category together with a $t$-structure $\Ho\, \cC^{\prime \leq 0}$, $\Ho \, \cC^{\prime \geq 0}$ on the
  homotopy category of $\cC'$, and let $\cT(\cC, \cC')^{\leq 0}$ (resp. $\cT(\cC, \cC')^{\geq 0}$) be the full subcategory of the triangulated category of DG quasi-functors $\cT(\cC, \cC')$ consisting of those $F\in  \cT(\cC, \cC')$ which carries
  every object of $\Ho\, \cP$ to an object of  $\Ho\, \cC^{\prime \leq 0}$ (resp. $\Ho\, \cC^{\prime \geq 0}$). Then, the subcategories $\cT(\cC, \cC')^{\leq 0}, \cT(\cC, \cC')^{\geq 0} \subset \cT(\cC, \cC')$ constitute a $t$-structure on   $\cT(\cC, \cC')$.
   \end{pr}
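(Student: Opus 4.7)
The plan is to verify the three axioms of a $t$-structure after reducing morphisms in $\cT(\cC,\cC')$ to a more computable form. The key input is the chain of equivalences
$$\cT(\cC,\cC') \iso \cT(\cP,\cC') \iso \cT(\cP,\cC'_{\trunc}),$$
where the first equivalence uses that $\cC\simeq \cP^{\pretr}$ together with the universal property of pretriangulated completion (since $\cC'$ is pretriangulated), and the second is Proposition \ref{propoftrunc} applied to the negative DG category $\cP$. Through these identifications, $\cT(\cC,\cC')^{\leq 0}$ (resp.\ $\cT(\cC,\cC')^{\geq 0}$) corresponds to the quasi-functors $\cP\to \cC'_{\trunc}$ taking values pointwise in $\Ho\,\cC^{\prime \leq 0}$ (resp.\ $\Ho\,\cC^{\prime \geq 0}$) after composition with $\cC'_{\trunc}\to \cC'$. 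The stability axiom, that $\cT(\cC,\cC')^{\leq 0}[1]\subset \cT(\cC,\cC')^{\leq 0}$ and $\cT(\cC,\cC')^{\geq 0}[-1]\subset \cT(\cC,\cC')^{\geq 0}$, is then immediate from the analogous property of the $t$-structure on $\Ho\,\cC'$.

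For the existence of truncation triangles, I would show that the $t$-structure on the pretriangulated DG category $\cC'$ canonically lifts to a distinguished triangle of quasi-endofunctors
$$\tau^{\leq 0}\to \mathrm{Id}_{\cC'}\to \tau^{\geq 1}\to \tau^{\leq 0}[1]$$
in $\cT(\cC',\cC')$. Indeed, writing $\cC^{\prime \leq 0}\subset \cC'$ for the full DG subcategory on the objects of $\Ho\,\cC^{\prime \leq 0}$, the DG inclusion $\cC^{\prime \leq 0}\mono \cC'$ has a right adjoint on homotopy categories, and by a standard lifting argument in the $2$-category $DGcat$ this adjoint promotes to a quasi-functor; the same applies to the inclusion of $\cC^{\prime \geq 1}$. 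For an arbitrary $F\in \cT(\cC,\cC')$, composition yields a triangle $\tau^{\leq 0}\circ F\to F\to \tau^{\geq 1}\circ F$ whose outer terms, evaluated on any $P\in \Ho\,\cP$, land in $\Ho\,\cC^{\prime \leq 0}$ and $\Ho\,\cC^{\prime \geq 1}$ respectively, and therefore lie in $\cT(\cC,\cC')^{\leq 0}$ and $\cT(\cC,\cC')^{\geq 1}$ by definition.

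For the orthogonality axiom, given $F\in \cT(\cC,\cC')^{\leq 0}$ and $G\in \cT(\cC,\cC')^{\geq 1}$, I would compute $\Hom_{\cT(\cC,\cC')}(F,G)$ via $\cT(\cP,\cC'_{\trunc})$ as a derived end, that is, as $R\Hom$ in the derived category of $\cP$-bimodules from the identity bimodule $\Hom_\cP(-,-)$ to the bimodule $M(P,Q):=\Hom_{\cC'_{\trunc}}(F(P),G(Q))$. The $t$-structure axiom applied to $F(P)\in \Ho\,\cC^{\prime \leq 0}$ and $G(Q)[i]\in \Ho\,\cC^{\prime \geq 1-i}$ shows that $\Hom_{\cC'}(F(P),G(Q))$ is concentrated in strictly positive cohomological degrees, whence $M(P,Q)=\tau^{\leq 0}\Hom_{\cC'}(F(P),G(Q))$ is acyclic for every pair $(P,Q)$. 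A $\cP$-bimodule which is pointwise acyclic is zero in the derived category of bimodules, so the $R\Hom$ vanishes, as required.

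The main obstacle I anticipate is step two: establishing that the naive pointwise assignment $P\mapsto \tau^{\leq 0}F(P)$ actually assembles into a DG quasi-functor, not merely a functor on homotopy categories. This amounts to producing a canonical quasi-functor lift of the truncation endofunctor of $\Ho\,\cC'$, which is a standard but delicate invocation of the fact that adjoints in $DGcat$ can be constructed from their counterparts on the triangulated quotient when the relevant subcategory inclusion is a DG functor; I would cite this rather than reconstruct it from scratch. Once this lifting is in place, the remaining verifications consist of routine manipulations with the canonical equivalences of Section \ref{s.trunc} and of the axioms of a $t$-structure.
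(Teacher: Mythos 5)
Your overall skeleton (reduce to $\cT(\cP,\cC')\iso\cT(\cP,\cC'_{\trunc})$ via $\cC\simeq\cP^{\pretr}$ and Proposition \ref{propoftrunc}, check stability trivially, and get orthogonality from pointwise acyclicity of the truncated Hom bimodule) is sound and close to the paper's route; indeed your orthogonality argument is spelled out in more detail than in the paper. The genuine gap is exactly at the point you flagged, and your proposed resolution does not work: the inclusion $\cC^{\prime\leq 0}\mono\cC'$ of the \emph{untruncated} DG subcategory does \emph{not}, in general, admit a right adjoint quasi-functor, and there is no "standard lifting argument in $DGcat$" producing one from the adjoint on homotopy categories. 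An adjunction in the $2$-category $DGcat$ (unit and counit satisfying the triangle identities up to homotopy) forces quasi-isomorphisms of the \emph{full} Hom complexes $\Hom_{\cC'}(Z,\tau_{\leq 0}X)\to\Hom_{\cC'}(Z,X)$ for $Z\in\cC^{\prime\leq 0}$, and these fail in positive degrees: already for $\cC'=D_{dg}(\bZ\Mmod)$ with the standard $t$-structure, $Z=\bZ$ and $X=\bZ[-1]$ give $\tau_{\leq 0}X=0$ while $H^1\Hom(Z,X)\neq 0$; since any $DGcat$-adjoint would have to induce $\tau_{\leq 0}$ on homotopy categories, no such adjoint exists. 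For the same reason your stronger claim, a distinguished triangle of quasi-endofunctors $\tau^{\leq 0}\to\mathrm{Id}_{\cC'}\to\tau^{\geq 1}$ in $\cT(\cC',\cC')$, is not available in general.

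The repair is precisely the paper's construction, and it stays entirely inside the truncated picture you already set up in your first paragraph: the embedding $(\cC^{\prime\leq 0})_{\trunc}\to\cC'_{\trunc}$ \emph{does} admit a right adjoint quasi-functor sending $X$ to (an object quasi-isomorphic to) $\tau_{\leq 0}X$, because for $Z\in\cC^{\prime\leq 0}$ the map $\tau_{\leq 0}\Hom_{\cC'}(Z,\tau_{\leq 0}X)\to\tau_{\leq 0}\Hom_{\cC'}(Z,X)$ \emph{is} a quasi-isomorphism (the positive-degree discrepancy is exactly what $\tau_{\leq 0}$ kills). Composing quasi-functors $\cP\to\cC'_{\trunc}$ with this adjoint, and using Proposition \ref{propoftrunc} (negativity of $\cP$) together with $\cP^{\pretr}\iso\cC$, gives the truncation $\tau_{\leq 0}$ on $\cT(\cC,\cC')$ right adjoint to the inclusion of $\cT(\cC,\cC')^{\leq 0}$; the truncation triangle for $F$ is then obtained in the pretriangulated category $\cT(\cP,\cC')$ by completing $\tau_{\leq 0}F\to F$ to a triangle and checking pointwise on $\Ho\,\cP$ that the cone lies in degrees $\geq 1$. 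With that substitution your argument goes through; no endofunctor-level lift of the truncation on $\cC'$ itself is needed.
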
   
 \begin{proof}
 We just explain a construction of the functor
 $$\tau_{\leq 0}: \cT(\cC, \cC') \to \cT(\cC, \cC')^{\leq 0}$$
 right adjoint to the embedding $\cT(\cC, \cC')^{\leq 0}\mono \cT(\cC, \cC')$.
 Let $\cC^{\prime \leq 0}$ be the full subcategory consisting of those objects which are isomorphic in $\Ho\, C'$ to an object of $\Ho\,  \cC^{\prime \leq 0}$. The embedding $(\cC^{\prime \leq 0})_{\trunc} \to \cC'_{\trunc}$ admits a right adjoint DG quasi-functor
  $\cC'_{\trunc} \to  (\cC^{\prime \leq 0})_{\trunc}$
 that carries each object $X\in \cC'_{\trunc}$
to an object quasi-isomorphic to $\tau_{\leq 0}X$. 
Using Proposition \ref{propoftrunc} this yields a functor
$$\cT(\cP, \cC') \iso \cT(\cP, \cC'_{\trunc})\to  \cT(\cP, (\cC^{\prime \leq 0})_{\trunc}) \iso \cT(\cP, \cC^{\prime \leq 0})$$
 right adjoint to $\cT(\cP, \cC^{\prime \leq 0})\mono \cT(\cP, \cC')$. Finally, since $\cP^{\pretr}\iso \cC$, we have that
 $$\cT(\cC, \cC')  \iso \cT(\cP, \cC'), \quad  \cT(\cC, \cC')^{\leq 0}\iso  \cT(\cP, \cC^{\prime \leq 0}).$$
 \end{proof} 
 \begin{rem}\label{tstructurebis} The above proof also shows that, for every negative DG category $\cP$ and a pretriangulated DG category $\cC'$ with a $t$-structure on $\Ho\, \cC'$, the subcategories $\cT(\cP,   \cC^{\prime \leq 0}),  \cT(\cP,   \cC^{\prime \geq 0}) \subset \cT(\cP, \cC')$ constitute a $t$-structure on   $\cT(\cP, \cC')$.
 \end{rem} 
\subsection{The Chow weight structure on the category of Voevodsky motives.}\label{chow}  This is the most important for us example of a weight structure. 
 Denote by $D\cM^{\eff}_{\gm} (k, A)$  the DG category of effective Voevodsky motives over a perfect field $k$ with coefficients in a commutative ring $A$ (\cite{bv}, \S 2).
 Its homotopy category $DM^{\eff}_{\gm} (k, A)$
is the triangulated category of effective Voevodsky motives.
 Let $\cP \subset  D\cM^{\eff}_{\gm} (k;  A)$ be the full
 DG subcategory consisting of those motives that are isomorphic in $DM^{\eff}_{\gm} (k;  A)$ to direct summands of motives of smooth projective varieties. 
  It is proven in (\cite{bon2}, Th. 2.1.1) that if either $\Char k=0$  or  $\Char k$ is invertible in $A$, then $\cP$ is a weight structure on $D\cM^{\eff}_{\gm} (k;  A)$.
 Note that the homotopy category $\Ho\, \cP$ is equivalent to the category of pure effective Chow motives (\cite{voe}, Cor. 4.2.6).
  
  Given any pretriangulated DG category $\cC$ and quasi-functor $R: D\cM^{\eff}_{\gm} (k;  A) \to \cC$ we define a quasi-functor 
  $$\hat R: D\cM^{\eff}_{\gm} (k;  A) \to \cC^\natural$$
  to be the composition 
  $$\hat R:  D\cM^{\eff}_{\gm} (k;  A)\rar{Q_\cP} D\cM^{\eff}_{\gm} (k;  A)^\natural \rar{R^\natural } \cC^\natural .$$
  The functor $\hat R$ fits into the following commutative diagram
  \begin{equation}\label{mainweight}
\xymatrix{
D\cM^{\eff}_{\gm} (k;  A)  \ar[dr]_{R}  \ar[r]^{\hat R} & \cC^{ \natural}  \ar[d]^{can} \\
& \cC.\\
}
\end{equation}
\subsection{Some computations}\label{s.f.d.c} 
 The principal result of this subsection is an explicit description of the category $D^b_{dg}(\cB)^\natural$, where $\cB$ is an abelian category of homological dimension $\leq 1$.

 Let $A$ be a commutative ring, $\cB$ an $A$-linear abelian category, and let $F\cB$ be the category of finitely filtered objects of $\cB$.  
That is, objects of $F\cB$ are pairs $(V, W_{\cdot}V)$, where  $V\in \cB$ and $W_{\cdot} V$ 
is a increasing filtration on $V$,
such that, for sufficiently large $i$, we have $W_iV=V$, $W_{-i}V=0$. Morphisms between  $(V, W_{\cdot} V)$ and $(V', W_{\cdot} V')$ form a subgroup of $\Hom_{\cB}(V, V')$ consisting of those $f: V\to V'$ that carry $W_iV$ to $W_iV'$, for every $i$. 
We introduce an exact structure on $F\cB$ postulating that a sequence 
$$0\to (V^0, W_{\cdot} V^0) \to (V^1, W_{\cdot} V^1) \to (V^2, W_{\cdot} V^2) \to 0$$ is exact if, 
for every $i$,  the sequence    
$$0\to  \on Gr^i _W V^0 \to \on Gr^i _W V^1 \to \on Gr^i _W V^2\to 0$$ splits. We denote by $D^b_{dg}(F\cB)$ the corresponding derived DG category over $A$ and by $D^b(F\cB)$ the triangulated derived category (\cite{n}).
For an object $V$ of $\cB$, let  $V\{i\}$ be the object of $F\cB$ with $W_jV\{i\}$ equal to $V$ if $j\geq - i$ and to $0$ otherwise. The following result is due to Positselski (\cite{p}, Example in \S 8).
\begin{lm} \label{posiclm} If  $n\leq j-i$ the forgetful functor $\Phi: D^b(F\cB)\to D^b(\cB)$ induces an isomorphism 
$$
\Hom_{D^b(F\cB)}(V\{i\}, V'\{j\}[n])\iso \Hom_{D^b(\cB)}(V, V'[n]).
$$
If $n> j-i$  then $\Hom_{D^b(F\cB)}(V\{i\}, V'\{j\}[n])$ is trivial.
\end{lm}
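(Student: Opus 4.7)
I would compute $\Hom$-groups in $D^b(F\cB)$ as Yoneda equivalence classes of $n$-fold exact sequences in the exact category $F\cB$, whose admissible short exact sequences are those with split graded pieces. The case $n=0$ is direct: a morphism $V\{i\}\to V'\{j\}$ in $F\cB$ is a $\cB$-morphism $V\to V'$ whose image lies in $W_{-i}V'\{j\}$, which equals $V'$ if $j\ge i$ and $0$ otherwise; the case $n<0$ is trivial.

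For $n=1$, I would classify short exact sequences $0\to V'\{j\}\to X\to V\{i\}\to 0$ by analyzing the filtration on $X$ imposed by strictness. For $j>i$, $X$ is an arbitrary $\cB$-extension of $V$ by $V'$ (with $W_k X = V'$ for $-j\le k<-i$ and $W_{-i}X=X$), giving $\Ext^1_{F\cB}\iso\Ext^1_\cB(V,V')$. For $j=i$ the splitting condition on the graded piece at weight $-i$ forces $X\cong V\{i\}\oplus V'\{i\}$; for $j<i$ the identification $W_{-i}X\cong V$ together with $V'\{j\}\subset X$ gives $X=V\oplus V'$ as a $\cB$-object, hence $X\cong V\{i\}\oplus V'\{j\}$ in $F\cB$. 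In both cases $\Ext^1_{F\cB}=0$.

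For $n\ge 2$ with $n\le j-i$, I would construct an inverse $\Psi\colon\Ext^n_\cB(V,V')\to\Ext^n_{F\cB}(V\{i\},V'\{j\})$ to $\Phi$ by \emph{weight-spreading}: pick strictly increasing integers $i=m_0<m_1<\cdots<m_{n-1}<m_n=j$ (possible exactly because $n\le j-i$), and realize an $n$-fold Yoneda product in $\cB$ as an $F\cB$-extension whose $k$-th connecting object is pure of weight $-m_k$. By the $n=1$ analysis each resulting short exact sequence is classified by the corresponding $\Ext^1_\cB$-class, so $\Phi\circ\Psi$ is the identity and $\Phi$ is surjective. For injectivity, I would show that every $n$-fold $F\cB$-extension is Yoneda-equivalent to one in weight-spread form by iteratively refining each intermediate term $X^k$ via the admissible exact triangles $W_{k-1}X^k\to W_k X^k\to \Gr^k_W X^k$.

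For $n>j-i$, the same refinement forces vanishing: after reducing to pure connecting objects $K_0=V\{i\},K_1,\ldots,K_n=V'\{j\}$ of weights $-m_0,\ldots,-m_n$, the requirements $m_0=i$, $m_n=j$, and $n>j-i$ make a strict increase $m_0<m_1<\cdots<m_n$ impossible, so some $m_k\le m_{k-1}$; by the $n=1$ analysis $\Ext^1_{F\cB}(K_{k-1},K_k)=0$, and the corresponding short exact sequence splits, collapsing the entire $n$-fold extension. The main obstacle throughout is the refinement step, where one must verify that Yoneda modifications respecting the split-graded-piece axiom can always be carried out. A cleaner structural route (likely Positselski's) is to construct a DG quasi-functor from the auxiliary DG category with objects $(V,i)$ and morphism complexes $\tau_{\le j-i}\,R\!\Hom_{D^b_{dg}(\cB)}(V,V')$ into $D^b_{dg}(F\cB)$ sending $(V,i)\mapsto V\{i\}$, and verify fully faithfulness by a weight-by-weight comparison.
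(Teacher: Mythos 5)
Your $n=1$ analysis and your surjectivity argument for $n\le j-i$ (the ``weight-spreading'' choice of $i=m_0<m_1<\cdots<m_n=j$ and lifting a Yoneda product factor by factor) are exactly the paper's. The gap is where you yourself point: both your injectivity argument in the range $n\le j-i$ and your vanishing argument for $n>j-i$ rest on the claim that an arbitrary admissible $n$-fold extension in $F\cB$ can be brought, within its Yoneda class, to a form whose connecting objects are \emph{pure} of prescribed weights. This is not established, and it is not a routine step: splicing an $n$-extension produces connecting objects that are arbitrary filtered objects, and replacing such an object by its graded pieces via the filtration sequences $W_{k-1}X\to W_kX\to \on{Gr}^k_WX$ does not in any evident way preserve (or even additively decompose) the Yoneda class --- one has to chase long exact sequences and control correction terms, and nothing in your sketch does this. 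The alternative ``structural route'' you mention (a DG quasi-functor $(V,i)\mapsto V\{i\}$ with truncated $\Hom$ complexes, checked to be fully faithful) is essentially a restatement of the lemma, so it does not close the gap either.

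The paper avoids purification altogether: it only ever controls \emph{weight bounds} of the connecting object. For $n>j-i$ it inducts on $n$: write $\alpha=\alpha_1\alpha_{n-1}$ with $\alpha_{n-1}\in\Ext^{n-1}_{F\cB}(V\{i\},X)$, use the induction hypothesis (applied to the graded pieces of $X/W_{-i-n+1}X$) to see that $\alpha_{n-1}$ factors through $W_{-i-n+1}X[n-1]$, so one may assume $W_{-i-n+1}X=X$, and then $\Ext^1_{F\cB}(X,V'\{j\})=0$ by the $n=1$ case --- no reduction to pure $X$ is needed. For injectivity with $n\le j-i$ it proves the stronger statement that $\Ext^n_{F\cB}(X,Y)\to\Ext^n_{\cB}(\Phi(X),\Phi(Y))$ is an isomorphism whenever $W_{-i-1}X=0$, $W_{-j}Y=Y$ (so that the induction closes), again writes $\alpha=\alpha_1\alpha_{n-1}$ with $W_{-i-n+1}X=X$, represents $\alpha_1$ by $0\to V'\{j\}\to Y\to X\to 0$, and compares the long exact sequences in $F\cB$ and $\cB$: two vertical maps are isomorphisms by induction, and a diagram chase shows $\alpha=0$ exactly when $\alpha_{n-1}$ lies in the image of $h_*$, which follows from the vanishing of the image of $\alpha$ in $\Ext^n_\cB$. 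If you want to keep your outline, you would in effect have to prove a statement of this inductive, weight-bounded type anyway; as written, the refinement step is a genuine missing idea rather than a detail.
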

\begin{proof} Since a proof is only sketched in (\cite{p}) we give a complete argument here. 
First, we check using induction on $n$ that $\Ext^n_{F\cB}(V\{i\}, V'\{j\})=0$ is for  $n> j-i$. The group $\Ext^1_{F\cB}(V\{i\}, V'\{j\})$ classifies extensions $0\to V'\{j\} \to X \to V\{i\}\to 0$ in $F\cB$. If $j\leq i$ every such extension splits: for $i=j$ this follows from the definition of the exact structure on $F\cB$ and for $j< i$ the splitting is given by $W_{-i}X\iso V\{i\}$. Given $n>1$ and  $\alpha \in \Ext^n_{F\cB}(V\{i\}, V'\{j\})$
we can find $X\in F\cB$ and  $\alpha_{n-1} \in \Ext^{n-1}_{F\cB}(V\{i\}, X)$, $\alpha_1 \in \Ext^{1}_{F\cB}(X, V'\{j\})$ such that $\alpha_{1} \alpha_{n-1}=\alpha$. Using the induction assumption
$\alpha_{n-1}: V\{i\}\to X[n-1]$ factors through $W_{-i-n+1}X[n-1]\to X[n-1]$. Thus, we may assume that $W_{-i-n+1}X=X$. But then $\Ext^{1}_{F\cB}(X, V'\{j\})=0$ since $n> j-i$. 

We now prove that for every $X, Y\in F\cB$ with $W_{-i-1}X=0$ and $W_{-j}Y=Y$ the morphism
  \begin{equation}\label{eq30}
 \Ext^n_{F\cB}(X, Y)\to \Ext^n_{\cB}(\Phi(X), \Phi(Y))
 \end{equation}
 is an isomorphism provided that   $n\leq j-i$ (this is a generalization of the assertion in the Lemma). For surjectivity, given $\alpha \in \Ext^n_{\cB}(\Phi(X), \Phi(Y))$ we can find objects $\Phi(X)=V_0, V_1, \cdots V_n=\Phi(Y)$ of $\cB$
 and elements $\alpha_m\in  \Ext^1_\cB(V_m, V_{m+1})$, $m=0,\cdots n-1$, such that $\prod \alpha_m= \alpha$. 
 Pick integers $i=l_0<l_1<\cdots < l_n=j$ (this is possible
because  $n\leq j-i$). Then the morphisms
$$\Ext^1_{F\cB}(X, V_{1}\{l_{1}\})\to \Ext^1_{\cB}(\Phi(X), V_{1}),$$
$$ \Ext^1_{F\cB}(V_m\{l_m\}, V_{m+1}\{l_{m+1}\})\to \Ext^1_{\cB}(V_m, V_{m+1}), \quad 1\leq m \leq n-2,$$
$$ \Ext^1_{F\cB}(V_{n-1}\{l_{n-1}\}, X)\to \Ext^1_{\cB}(V_{n-1}, \Phi(Y))$$
are isomorphisms. Let $\tilde \alpha_m$, $m=0,\cdots n-1$, be the preimages of $\alpha_m$ under the above isomorphisms. 
Then the image of $\prod \tilde \alpha_m \in \Ext^n_{F\cB}(X, Y)$ in $\Ext_\cB^n(\Phi(X), \Phi(Y))$ is our $\alpha$.
 
It remains to prove injectivity of (\ref{eq30}). We do this by induction on $n$. If $n=1$ the statement is clear. For the induction step it is enough to check injectivity of (\ref{eq30}) for
$X=V\{i\}$ and  $Y=V'\{j\}$ with $n\leq j-i$.
  Let  $\alpha \in \Ext^n_{F\cB}(V\{i\}, V'\{j\})$ be an element whose image in $\Ext^n_{\cB}(V, V')$ is zero. As at the first step of the proof
we can find $X=W_{-i-n+1}X \in F\cB$ and  $\alpha_{n-1} \in \Ext^{n-1}_{F\cB}(V\{i\}, X)$, $\alpha_1 \in \Ext^{1}_{F\cB}(X, V'\{j\})$ such that $\alpha_{1}\alpha_{n-1}=\alpha$. Let 
$$0\to V'\{j\}\rar{ } Y\rar{h} X\to 0$$
be a short exact sequence in $F\cB$ that represents $\alpha_1$.   Consider the following commutative diagram
   \begin{equation}
\def\normalbaselines{\baselineskip20pt
\lineskip3pt  \lineskiplimit3pt}
\def\mapright#1{\smash{
\mathop{\to}\limits^{#1}}}
\def\mapdown#1{\Big\downarrow\rlap
{$\vcenter{\hbox{$\scriptstyle#1$}}$}}
\begin{matrix}
\Ext^{n-1}_{F\cB}(V\{i\}, Y)       &\rar{h_*} &    \Ext^{n-1}_{F\cB}(V\{i\}, X)       &\rar{\cdot \alpha_1}&          \Ext^n_{F\cB}(V\{i\}, V'\{j\}) \cr
   \mapdown{\simeq}  & & \mapdown{\simeq}   & & \mapdown{} \cr
 \Ext^{n-1}_{\cB}(V, \Phi(Y))      &  \rar{}  & \Ext^{n-1}_{\cB}(V, \Phi(X)) &\rar{ }&          \Ext^n_{\cB}(V, V'), 
\end{matrix}
 \end{equation}
We observe that $\alpha=0$ if and  only if $\alpha_{n-1}$ lies in the image of $h_*$. An easy diagram chase using the  exactness of the rows completes the proof. 


\end{proof}

\begin{pr}\label{bisfiltr}  Let $A$ be a commutative ring, $\cB$ a small $A$-linear abelian category of homological dimension $\leq 1$.  Then the category $D^b_{dg}(\cB)^\natural$ is quasi-equivalent to $D^b_{dg}(F\cB)$. 
\end{pr}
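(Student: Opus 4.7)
The plan is to identify both $D^b_{dg}(F\cB)$ and $D^b_{dg}(\cB)^\natural$ as pretriangulated completions of explicit DG subcategories, and to match these subcategories via Lemma \ref{posiclm}. On the $F\cB$ side, let $\cP\subset D^b_{dg}(F\cB)$ be the full DG subcategory spanned by the pure objects $V\{i\}$, $V\in\cB$, $i\in\bZ$. By the definition of the exact structure on $F\cB$, every filtered object is an iterated extension of its pure graded pieces, so $\cP$ generates $D^b_{dg}(F\cB)$ as a pretriangulated DG category: $\cP^{\pretr}\simeq D^b_{dg}(F\cB)$. Lemma \ref{posiclm}, combined with the homological-dimension-$\leq 1$ assumption, computes the cohomology of $\Hom_{D^b_{dg}(F\cB)}(V\{i\},V'\{j\})$: it equals $\Hom_\cB(V,V')$ in degree $0$ (when $j\geq i$) and $\Ext^1_\cB(V,V')$ in degree $1$ (when $j\geq i+1$), and vanishes otherwise.

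Next, I would construct a DG quasi-functor $\Psi\colon D^b_{dg}(F\cB)\to D^b_{dg}(\cB)^\natural$ by specifying it on $\cP$. Concretely, $\Psi$ sends $V\{i\}$ to the object obtained by applying the formal shift $[-i]$ in the pretriangulated completion $D^b_{dg}(\cB)^\natural$ to the complex $V[i]$ (placing $V$ in cohomological degree $-i$), regarded as an object of $(D^b_{dg}(\cB))_{\trunc}$. Using the formula $\Hom_{D^b_{dg}(\cB)}(V[a], V'[b])\simeq \Hom_\cB(V,V')[b-a]$ together with the homological-dimension bound, a direct calculation with the truncation $\tau_{\leq 0}$ shows that the morphism complex in $D^b_{dg}(\cB)^\natural$ between $\Psi(V\{i\})$ and $\Psi(V'\{j\})$ has the same cohomology---$\Hom_\cB(V,V')$ in degree $0$ and, when $j\geq i+1$, also $\Ext^1_\cB(V,V')$ in degree $1$---as $\Hom_\cP(V\{i\},V'\{j\})$. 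Hence $\Psi$ is fully faithful on $\cP$ and extends by the universal property of the pretriangulated completion to a fully faithful quasi-functor $\hat\Psi\colon D^b_{dg}(F\cB)\to D^b_{dg}(\cB)^\natural$.

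For essential surjectivity, any bounded complex $X^\cdot$ in $\cB$---an object of $(D^b_{dg}(\cB))_{\trunc}\subset D^b_{dg}(\cB)^\natural$---is the iterated cone of its columns via the differentials $X^i\to X^{i+1}$, each of which lies in cohomological degree $0$ of the DG hom complex and hence survives truncation. Each column, after absorbing the formal shifts, lies in the image of $\hat\Psi$ (the preimage being a shifted pure object of $F\cB$), so $X^\cdot$ does too. The main obstacle is verifying the fully-faithful property on $\cP$: the DG categories $\cP$ and $\Psi(\cP)$ do not match naively, because the cohomology of the respective hom complexes sits in different cohomological degrees---the truncation $\tau_{\leq 0}$ does not commute with shifts in $D^b_{dg}(\cB)$. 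The formal shifts $[-i]$ in the pretriangulated completion are introduced precisely to absorb this discrepancy, and the homological-dimension-$\leq 1$ hypothesis is essential throughout, keeping the Ext groups bounded and allowing a clean application of Lemma \ref{posiclm}.
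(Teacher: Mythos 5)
Your construction goes in the same direction as the paper's (a functor from $D^b_{dg}(F\cB)$ to $D^b_{dg}(\cB)^\natural$ defined on pure generators, full faithfulness via Lemma \ref{posiclm}, essential surjectivity from the dimension bound), but two steps do not hold as written. The milder one: specifying $\Psi$ on objects and observing that the two Hom complexes have isomorphic cohomology does not produce a DG quasi-functor, nor even a map realizing that isomorphism. The paper's device is to work with the \emph{shifted} generators $V\{i\}[i]$: by Lemma \ref{posiclm} they span a negative full DG subcategory $\cQ\subset D^b_{dg}(F\cB)$, so Proposition \ref{propoftrunc} lifts the already existing forgetful quasi-functor $\cQ\to D^b_{dg}(\cB)$ canonically through $D^b_{dg}(\cB)_{\trunc}$, and one then passes to pretriangulated completions; full faithfulness is then exactly the assertion of Lemma \ref{posiclm} about the map induced by the forgetful functor. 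Your unshifted subcategory spanned by the $V\{i\}$ is not negative (it has $\Ext^1$'s in degree $1$), so no such canonical lift is available, and you never say what $\Psi$ does on morphism complexes.

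The serious gap is essential surjectivity. A bounded complex $X^\cdot$, viewed as an object of $D^b_{dg}(\cB)_{\trunc}\subset D^b_{dg}(\cB)^\natural$, is in general \emph{not} isomorphic in $\Ho\, D^b_{dg}(\cB)^\natural$ to the iterated cone of its columns formed inside the completion: the identification of that iterated cone with $X^\cdot$ in $D^b_{dg}(\cB)$ uses comparison maps whose components sit in positive degrees of the Hom complexes, precisely the part killed by $\tau_{\leq 0}$. Concretely, let $\cB$ be finitely generated abelian groups and $X=[\bZ\rar{2}\bZ]$ in degrees $0,1$; for $C=\cone(\bZ\rar{2}\bZ)[-1]$ formed in $D^b_{dg}(\cB)^\natural$ out of the two columns, the defining triangle and the truncated Hom complexes give $\Hom_{\Ho\, D^b_{dg}(\cB)^\natural}(C,X)=0$, while $\Hom_{\Ho\, D^b_{dg}(\cB)^\natural}(X,X)\cong\bZ/2\neq 0$, so $C\not\iso X$. (Under the equivalence being proved, $C$ corresponds to the filtered complex $[\bZ\rar{2}\bZ]$ with both terms of the same filtration index, whereas $X$ corresponds to $\bZ/2\{-1\}[-1]$.) Note that your argument never actually uses homological dimension $\leq 1$ at this step, which is a warning sign: the correct argument (the paper's) is that the dimension bound makes every bounded complex isomorphic in $D^b(\cB)$ to the direct sum of its shifted cohomology objects, and since both sides lie in $D^b_{dg}(\cB)_{\trunc}$ and the isomorphism is a degree-zero cocycle, it survives truncation; hence every object of $D^b_{dg}(\cB)_{\trunc}$ is homotopy equivalent to a finite sum of objects in the image of $\cQ$, which is what gives essential surjectivity.
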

\begin{proof}
Let $\cQ$ be the full DG subcategory of  $D^b_{dg}(F\cB)$ formed by objects $V\{i\}[i]$, with $V\in \cB$, $i\in \bZ$ . By Lemma \ref{posiclm}
the category $\cQ$ is negative. Hence, using Proposition \ref{propoftrunc}, 
the forgetful functor $\cQ \mono D^b_{dg}(F\cB)\to D^b_{dg}(\cB)$ factors as follows
$$\cQ \to D^b_{dg}(\cB)_{\trunc} \to D^b_{dg}(\cB).$$
Since $D^b_{dg}(F\cB)$ is generated by $\cQ$,  $\cQ^{\pretr}\iso D^b_{dg}(F\cB)$,  we get a quasi-functor 
 \begin{equation}\label{posic}
D^b_{dg}(F\cB)\to  (D^b_{dg}(\cB)^\natural
 \end{equation}
 which is a homotopically fully faithful by the Lemma \ref{posiclm}.  As $\cB$ has homological dimension $\leq 1$,  every complex in $D^b_{dg}(\cB)_{\trunc}$ is homotopy equivalent to a finite direct sum of objects in the image of $\cQ$.  It follows
 that (\ref{posic}) is a homotopy equivalence.
\end{proof}

\section{Enriched Hodge structures.} \label{e.h.s.}
\subsection{Generalities.}\label{s.g.c}
We start with a general construction. Let $k$ be a perfect field, $A$ a commutative ring,  and let $\cB$ be an $A$-linear abelian category of homological dimension $\leq 1$. 
We assume that either $\Char k=0$ or that $\Char k$ is invertible in $A$.
Let  $D\cM^{\eff}_{\gm} (k;  A)$
 be the DG category of effective geometric Voevodsky motives (\cite{bv}, \S 2), and let 
$$R: D\cM^{\eff}_{\gm} (k;  A) \to D^b_{dg}(\cB)$$
be a DG quasi-functor. Using diagram (\ref{mainweight}) and Proposition \ref{bisfiltr} it follows that the Chow weight structure on $D\cM^{\eff}_{\gm} (k;  A)$ determines a quasi-functor $\hat R$ fitting into the commutative diagram
  \begin{equation}\label{bondarko}
\xymatrix{
D\cM^{\eff}_{\gm} (k;  A)  \ar[dr]_{R}  \ar[r]^{\hat R} & D^b_{dg}(F\cB)  \ar[d]^{\Phi} \\
& D^b_{dg}(\cB).\\
}
\end{equation}
Here $\Phi$ is induced by the forgetful functor $F\cB \to \cB$.  

  Let us record some elementary properties of $\hat R$, which follow directly from the construction.
   \begin{enumerate}[(i)] 
\item For an integer $i$, let
$$\on Gr^i_W: D^b_{dg}(F\cB)\to C^b(\cB)$$
be the quasi-functor to the DG category of chain complexes over $\cB$,
 which carries a complex $(V^\cdot, W_\cdot  V^\cdot)\in D^b_{dg}(F\cB)$ to $\on Gr^i_W V^\cdot$. Then the quasi-functor 
 $$\on Gr^i_W \circ \hat R:  D\cM^{\eff}_{\gm} (k;  A)  \to C^b(\cB)$$
 carries every object $X$ of $\cP$ to the one-term complex 
  \begin{equation}\label{smproj}
 \on Gr^i_W(\hat R (X))\simeq H^i R(X)[-i].
 \end{equation}
 \item Recall from Proposition \ref{tstructure} that the weight structure $\cP\subset  D\cM^{\eff}_{\gm} (k;  A)$ determines a $t$-structure on the triangulated category $\cT(D\cM^{\eff}_{\gm} (k;  A), D^b_{dg}(\cB))$ of quasi-functors. Let
 $\tau_{\leq i} R$ be the canonical truncation of $R$ relative to this $t$-structure. Then   $\tau_{\leq i} R$ is isomorphic to 
 the composition
 $$ D\cM^{\eff}_{\gm} (k;  A)  \rar{\hat R} D^b_{dg}(F\cB)\rar{W_i}  D^b_{dg}(\cB),$$
 which carries $M\in D\cM^{\eff}_{\gm} (k;  A) $ to $W_i \hat R(M)$.  
 \item For every $X, Y\in \cP$ the forgetful functor $\Phi$ yields a quasi-isomorphism
    \begin{equation}\label{smproj2}
\Hom_{D^b_{dg}(F\cB)}(\hat R(X), \hat R(Y))\iso  \tau_{\leq 0}\Hom_{D^b_{dg}(\cB)}(R(X), R(Y)).
\end{equation}
\end{enumerate} 
  
 \subsection{Enriched Hodge realization.}\label{s.in.h.str.} Let  $A$ denote a subring of $\bQ$. 
We apply the above construction to the Hodge realization DG quasi-functor 
\begin{equation}\label{hodgemain}
 R^{Hodge}_A:  D\cM^{\eff}_{\gm} (\mathbb{C};  A)\to  D_{dg}^b(MHS^{A}).
 \end{equation}  
constructed in (\cite{vol}, \S 2).  By Lemma 1.1 from \cite{bei} the category $MHS^{A}$ of  polarizable mixed 
$A$-Hodge structures has cohomological dimension $1$. Consider the full subcategory $MHS^{A, \en}$  of $FMHS^{A}$ 
  whose objects (called enriched mixed Hodge structures) are polarizable mixed $A$-Hodge structures $V$ equipped with a filtration $W_{\cdot}V$ such that $\on Gr^i_W V$ is pure of weight $i$ (in particular, the filtration $W_{\cdot}V\otimes_A \bQ$ coincides with the weight filtration on $V\otimes _A \bQ$).
  The category $MHS^{A, \en}$ inherits  an exact structure from  $FMHS^{A}$.  An enriched mixed Hodge structure $(V, W_{\cdot}) \in MHS^{A, \en}$ is called effective if $F^1=0$, $W_0V=V$ and $\on Gr^0_W V$ is torsion free. 
  Denote by $MHS^{A, \en}_{\eff}\subset MHS^{A, \en}$
   the full subcategory of effective enriched mixed Hodge structures. 
   \begin{lm}  The functor $D^b(MHS^{A, \en}_{\eff}) \to D^b( FMHS^{A})$ is a fully faithful embedding.   
\end{lm}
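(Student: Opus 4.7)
Write $\cA = MHS^{A, \en}_{\eff}$ and $\cB = FMHS^A$, with the exact structure on $\cA$ inherited from $\cB$. The plan is to show that $\cA$ is a \emph{fully exact} full subcategory of $\cB$, in the sense that in any admissible short exact sequence $0 \to X \to Y \to Z \to 0$ of $\cB$, if any two of $X, Y, Z$ lie in $\cA$ then so does the third. Fully faithfulness of $D^b(\cA) \to D^b(\cB)$ will then follow by matching Yoneda $\Ext$ classes.

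The 2-out-of-3 closure property follows from the very definition of admissible exactness in $\cB$: each graded-piece sequence $0 \to \on{Gr}^i_W X \to \on{Gr}^i_W Y \to \on{Gr}^i_W Z \to 0$ splits, so $\on{Gr}^i_W Y \cong \on{Gr}^i_W X \oplus \on{Gr}^i_W Z$ canonically, and each piece is a direct summand of $\on{Gr}^i_W Y$. The defining conditions of $\cA$ --- purity of $\on{Gr}^i_W V$ of weight $i$, torsion-freeness of $\on{Gr}^0_W V$, $F^1 V_\bC = 0$, and $W_0 V_A = V_A$ --- are preserved under direct sums and direct summands at the level of graded pieces, while the conditions on $F^1$ and $W_0$ pass through sub-objects, quotients, and extensions in $\cB$ in the obvious way, since $F^1$ is a sub-functor of $V_\bC$ and $V/W_0$ is a quotient functor.

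To conclude fully faithfulness, I would verify that the natural map $\Ext^n_\cA(V, V') \to \Ext^n_\cB(V, V')$ is an isomorphism for all $V, V' \in \cA$ and all $n \geq 0$. The cases $n = 0, 1$ are immediate from the closure property (any extension in $\cB$ of objects in $\cA$ has middle term in $\cA$). For $n \geq 2$, the plan is to reduce to pure objects $V = V_0\{i\}$, $V' = V'_0\{j\}$ by induction on the length of the weight filtrations: the exact sequence $0 \to W_{-1} V \to V \to (\on{Gr}^0_W V)\{0\} \to 0$ lies in $\cA$ (the graded-piece splitting condition is automatic since the outer terms have disjoint weight ranges), so an application of the five lemma to the resulting long exact sequences of $\Ext$ gives the reduction. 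In the pure case, Lemma \ref{posiclm} combined with the cohomological dimension bound $\leq 1$ on $MHS^A$ (\cite{bei}) shows that $\Ext^n_\cB(V_0\{i\}, V'_0\{j\}) = 0$ for $n \geq 2$, and a parallel analysis using the rigidity of intermediate terms in Yoneda extensions gives the same vanishing in $\cA$.

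\textbf{Main obstacle.} The elementary closure property is the heart of the argument, but the subtlest step will be verifying the vanishing $\Ext^{\geq 2}_\cA(V_0\{i\}, V'_0\{j\}) = 0$ between pure objects. This requires showing that any higher Yoneda extension in $\cA$ can be trivialized within $\cA$ --- \emph{a priori} the zig-zag exhibiting triviality in $\cB$ might leave $\cA$. The argument relies on the tight constraints imposed on intermediate terms by the splitting of graded pieces in $\cB$, which pins down their weight decompositions up to summands, combined with the cohomological dimension bound on $MHS^A$ to collapse the resulting Yoneda extension to a trivial one.
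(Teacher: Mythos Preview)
Your approach is correct in outline and shares the same two ingredients as the paper's proof: (i) the subcategory $MHS^{A,\en}_{\eff}$ is closed under extensions in $FMHS^A$, and (ii) $\Ext^{\geq 2}_{FMHS^A}$ vanishes (by Lemma~\ref{posiclm} together with the bound on the homological dimension of $MHS^A$). The paper simply invokes a general result --- Positselski's Prop.~A.7 in \cite{p} --- which says precisely that an extension-closed full exact subcategory of an exact category with vanishing $\Ext^{\geq 2}$ yields a fully faithful functor on bounded derived categories. This one citation replaces your entire d\'evissage and your ``main obstacle''.

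Two remarks on where your version does more work than needed. First, the full 2-out-of-3 property is not required; extension-closedness alone suffices, and that is all the paper checks. Second, and more importantly, the step you flag as the main obstacle --- showing $\Ext^{\geq 2}_\cA(V_0\{i\},V'_0\{j\})=0$ internally to $\cA$ --- is not specific to Hodge structures at all and does not require ``rigidity of intermediate terms''. It is a general consequence of (i) and (ii): given a Yoneda $2$-class $\alpha\cdot\beta$ in $\cA$ with $\alpha\in\Ext^1_\cA(M,Y)$, $\beta\in\Ext^1_\cA(X,M)$, the vanishing of its image in $\Ext^2_\cB(X,Y)$ means $\beta$ lifts along the connecting map to some $\gamma\in\Ext^1_\cB(X,E_1)$; but $\Ext^1_\cB=\Ext^1_\cA$ on objects of $\cA$ by extension-closedness, so $\gamma\in\Ext^1_\cA(X,E_1)$ and $\alpha\cdot\beta=0$ already in $\cA$. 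Induction on $n$ finishes. Once you see this, the reduction to pure objects via the five lemma is superfluous: the vanishing of $\Ext^{\geq 2}$ holds for all objects of $\cB$ by d\'evissage along the weight filtration (which gives admissible short exact sequences in $FMHS^A$), and then passes to $\cA$ by the argument just sketched.
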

\begin{proof} As $MHS^{A}$ has homological dimension $1$,  Lemma \ref{posiclm} implies that, for every objects $V, V' \in FMHS^{A}$ and every $i>1$, we have
$$Hom_{FMHS^{A}}(V, V'[i])=0.$$
Since the subcategory  $MHS^{A, \en}_{\eff} \to FMHS^{A}$ is closed under extensions the lemma follows from (\cite{p}, Prop. A.7).
\end{proof}
The lemma together with formula (\ref{smproj}) imply that the functor $\hat R^{Hodge}_A$  from the diagram  (\ref{bondarko}) factors through $D_{dg}^b(MHS^{A, \en}_{\eff})$ yielding
the enriched Hodge realization functor
 $$  D\cM^{\eff}_{\gm} (\bC;  A) \to  D_{dg}^b(MHS^{A, \en}_{\eff}).$$
  {\bf Notation:}  If $k$ is a subfield of $\bC$,  we will write, by abuse of notation,  $\hat R^{Hodge}_A$ for the composition
\begin{equation}\label{hodgemaineff}
\hat R^{Hodge}_A:  D\cM^{\eff}_{\gm} (k;  A) \rar{f^*} D\cM^{\eff}_{\gm} (\mathbb{C};  A)\to  D_{dg}^b(MHS^{A, \en}_{\eff}) 
 \end{equation}
where $f^*$ is the base change functor.
 
 \begin{rem} Our category  $MHS^{\bQ, \en}$ is just $MHS^{\bQ}$ and   $\hat R^{Hodge}_\bQ=R^{Hodge}_\bQ$. On the other hand,  the integral realization functor $\hat R^{Hodge}_\bZ$, in contrast with $R^{Hodge}_\bZ$, does {\it not} 
 factor through the category of  \'etale Voevodsky motives $D\cM^{\eff}_{\gm, \et} (k;  \bZ)$.  Indeed, the functor  $\hat R^{Hodge}_\bZ$ takes the motive $(\bZ/m\bZ)(n):= cone(\bZ(n)\rar{m} \bZ(n))$ to the complex $\bZ(n)\rar{m} \bZ(n)$ of enriched  Hodge structures of weight $-2n$ and,
 in particular,   $\hat R^{Hodge}_\bZ((\bZ/m\bZ)(n))$ is not isomorphic to $\hat R^{Hodge}_\bZ(\bZ/(m\bZ)(n'))$ in $ D^b(MHS^{\bZ, \en}_{\eff})$, unless $n=n'$.  
 \end{rem}
\subsection{Relation to the work of Gillet and Soul\'e (\cite{gs}) and Bondarko (\cite{bon1}).}\label{s.gs.bon.}  The material of this subsection will not be used in the rest of the paper. 
In (\cite{gs}, Th. 2), Gillet and Soul\'e associated (using algebraic $K$-theory) with any algebraic variety $X$ over a field $k$ of characteristic $0$ an object $W(X)$ in the homotopy category $K^b(\Ho \, \cP)$ of bounded complexes of Chow motives over $k$, called the weight complex of $X$.
In (\cite{bon2}, \S 3.2.1), Bondarko extended Gillet-Soul\'e construction to a triangulated functor\footnote{The weight complex $W(X)$ of Gillet-Soul\'e is obtained by applying this functor to the motive of $X$ with compact support (\cite{voe}, \S 4.1).}
   \begin{equation}\label{gsbwc}
\cG: DM^{\eff}_{\gm} (k;  A)\to K^b(\Ho \, \cP).
  \end{equation}
 His construction can be explained as follows.   Using notation of \S \ref{s.g.c} we have a DG quasi-functor
 \begin{equation}\label{gsbc}
 \cP\liso\cP_{\trunc}\rar{\cG'} \Ho\, \cP,
 \end{equation}
 where the homotopy $\Ho\, \cP$ is viewed as a DG category with the $\Hom$ complexes supported in degree $0$, $\cG'$ is the identity on objects and the projection 
 $$\Hom_{\cP_{\trunc}}(X,Y)=\tau_{\leq 0}\Hom_\cP(X,Y)\to H^0\Hom_\cP(X,Y)=\Hom_{\Ho\, \cP}(X,Y)$$
 on morphisms.   Taking the 
 pretriangulated completion of (\ref{gsbc}) we obtain a DG quasi-functor
    \begin{equation}\label{dggsbwc}
  D\cM^{\eff}_{\gm} (k;  A)\liso \cP^{\pretr}\to (\Ho \, \cP)^{\pretr}=C^b(\Ho \, \cP) 
    \end{equation}
 to the DG category of bounded chain complexes over $\Ho \, \cP$. Passing to the homotopy categories we get  (\ref{gsbwc}). 
 We apply this construction to $k=\bC$.  Following (\cite{gs}, \S 3.1) given a nonnegative  integer $i$
 we consider the functor $\Ho \, \cP  \to HS_{-i}^A$ to the category of pure Hodge structures of weight $-i$ that takes a smooth projective variety $X$ to the Hodge structure on the $i^{th}$ homology group
 $H_i(X,A)$ and denote by $\Gamma_i: C^b( \Ho \, \cP  )\to C^b(HS_{-i}^A)$ the induced DG functor. We set
  $$\Gamma: C^b( \Ho \, \cP  ) \to \bigoplus_{i\geq 0} C^b(HS_{-i}^A), \quad \Gamma = \bigoplus_i \Gamma_i [i].$$
The composition of $\Gamma$ with (\ref{dggsbwc}) determines a quasi-functor 
   \begin{equation}\label{gsbwch}
   D\cM^{\eff}_{\gm} (k;  A) \rar{} \bigoplus_{i\geq 0} C^b(HS_i^A).
   \end{equation}
 Using (\ref{smproj}) it follows that functor (\ref{gsbwch}) is isomorphic to $\on Gr^{\cdot}_W \circ \hat R^{Hodge}_A$.

 \subsection{Lefschetz $(1,1)$ Theorem.}\label{s.del.coh.}  We refer the reader to Theorem \ref{mth} for a generalization of the following result.
  \begin{lm}\label{Le}  For every $M\in  D\cM^{\eff}_{\gm} (\bC;  A)$,    the functor $\hat R^{Hodge}_A$  induces a quasi-isomorphism
 $$ Hom_{D\cM^{\eff}_{\gm} (\bC;  A)}(M, A(1)) \iso Hom_{ D_{dg}^b(MHS^{A, \en}_{\eff})}(\hat R^{Hodge}_A(M),  A(1)).$$
\end{lm}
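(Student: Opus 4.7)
The plan is to reduce to the case of a smooth projective variety via the Chow weight structure, and then compute both sides explicitly. Both functors $M \mapsto \Hom_{DM}(M, A(1))$ and $M \mapsto \Hom_{D^b_{dg}(MHS^{A,\en}_{\eff})}(\hat R^{Hodge}_A(M), A(1))$ are cohomological in $M$, and the realization map between them is natural. Since $DM^{\eff}_{\gm}(\bC; A)$ is generated as a triangulated category by $\cP$, and both functors respect direct summands, a five-lemma argument on distinguished triangles reduces the assertion to the case $M = M(X)$ for $X$ a smooth projective variety over $\bC$.

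For such $M$, the left-hand side computes motivic cohomology $H^k_{\cM}(X, A(1)) \simeq H^{k-1}(X, \cO^*) \otimes A$, which vanishes outside $k \in \{1,2\}$ and equals $\cO^*(X) \otimes A$ (resp.\ $\mathrm{Pic}(X) \otimes A$) in degree $1$ (resp.\ $2$). To analyze the right-hand side I use that $A(1)[2]$, being a direct summand of $M(\bP^1)$, lies in $\cP$, and that $\hat R^{Hodge}_A(A(1)[2]) = A(1)\{2\}[2]$ in $D^b_{dg}(MHS^{A,\en}_{\eff})$ (the Tate enriched structure of weight $-2$, shifted). Applying the quasi-isomorphism (\ref{smproj2}) with the smooth projective $X$ and $Y = A(1)[2]$ gives
\[
\Hom_{D^b_{dg}(MHS^{A,\en}_{\eff})}(\hat R(X), A(1)\{2\}[2]) \iso \tau_{\leq 0} \Hom_{D^b_{dg}(MHS^A)}(R^{Hodge}_A(X), A(1)[2]).
\]
Passing to cohomology, one obtains that $\Hom_{D^b(MHS^{A,\en}_{\eff})}(\hat R(X), A(1)[k])$ equals $\Hom_{D^b(MHS^A)}(R^{Hodge}_A(X), A(1)[k])$ for $k \leq 2$ and vanishes for $k > 2$, which already matches the vanishing of motivic cohomology in degrees $\geq 3$.

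It then remains to identify, for $k \leq 2$, the absolute Hodge cohomology $\Hom_{D^b(MHS^A)}(R^{Hodge}_A(X), A(1)[k])$ with motivic cohomology via the realization map. Since $MHS^A$ has homological dimension $1$, the complex $R^{Hodge}_A(X)$ is formal, and the target decomposes as $\bigoplus_i \Ext^{k-i}_{MHS^A}(H_i(X, A), A(1))$. For $k \leq 0$ every summand vanishes (by weight mismatch or $\Ext^{<0}=0$), matching the vanishing on the motivic side. For $k = 1$ only $i = 0$ contributes, yielding $\Ext^1_{MHS^A}(A^{\pi_0(X)}, A(1)) \simeq \cO^*(X) \otimes A$ via the exponential sequence. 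For $k = 2$ the surviving contributions come from $i = 1$, giving $\Ext^1_{MHS^A}(H_1(X, A), A(1)) \simeq \mathrm{Pic}^0(X) \otimes A$ by Carlson's formula, and from $i = 2$, giving $\Hom_{MHS^A}(H_2(X, A), A(1)) \simeq \mathrm{NS}(X) \otimes A$ by the integral Lefschetz $(1,1)$ Theorem; since $\mathrm{Pic}^0$ is divisible, their direct sum is isomorphic to $\mathrm{Pic}(X) \otimes A$, and the realization map recovers the cycle class map.

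The main obstacle is the identification in degree $2$: one must verify that the map induced by $\hat R^{Hodge}_A$ coincides with the classical cycle class, for which the nontrivial input is the integral Lefschetz $(1,1)$ Theorem (to identify integral Hodge classes with $\mathrm{NS}(X) \otimes A$). The remaining degrees reduce to standard computations in mixed Hodge theory.
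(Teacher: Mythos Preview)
Your approach is essentially the same as the paper's: reduce to $M=M(X)$ with $X$ smooth projective, use that $A(1)[2]$ is a summand of $M(\bP^1)$ together with formula~(\ref{smproj2}) to replace the enriched target by $\tau_{\leq 2}$ of the absolute Hodge $\Hom$, and then compare both sides with $H^{i-1}(X,\cO_X^*)\otimes A$ via Voevodsky's and Beilinson's computations.

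The one point where your argument stops short is the last step, which you yourself flag as ``the main obstacle'': knowing that both sides are abstractly isomorphic to $\Pic(X)\otimes A$ is not enough --- you must check that the map induced by $R^{Hodge}_A$ is the expected one, and the Lefschetz $(1,1)$ Theorem only identifies the \emph{target}, it says nothing about this compatibility. The paper closes this by a concrete trick: for $i=1$, functoriality in $X$ reduces to $X=\Spec\bC$; for $i=2$, since $X$ is projective, $\Pic(X)$ is generated by classes of very ample line bundles, so one reduces to $X=\bP^n$, where the compatibility is immediate. Once you add this, your proof and the paper's coincide.
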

 \begin{proof} It suffices to prove the result in the case when $M$ is the motive $M(X)$ of a smooth projective variety $X$. Using that $A(1)[2]$ is a direct summand of $M(\bP^1)$ and  
 formula (\ref{smproj2}) we reduce the Lemma  
 to showing that the morphism
\begin{equation}\label{c.tr}
Hom_{D\cM^{\eff}_{\gm} (\bC;  A)}(M(X), A(1)) \to \tau_{\leq 2} Hom_{ D^b_{dg}(MHS^{A})}( R^{Hodge}_A(M(X)), A(1))
 \end{equation} 
 is a quasi-isomorphism. In fact,  the degree $i$ cohomology group of the complex $Hom_{D\cM^{\eff}_{\gm} (\bC;  A)}(M(X), A(1))$ is identified by Voevodsky with
  $H^{i-1}_{\text{Zar}}(X, \cO_{X}^* )\otimes A$ (see, {\it e.g.} \cite{bv}, \S 3.2), which is non-zero only when $i=1$ or $2$. On the other hand,  using Beilinson's computation of $\Ext$ groups in $MHS^{A}$ (\cite{bei}, Lemma 2.3) and the Lefschetz $(1,1)$ Theorem
  the cohomology groups of the complex $Hom_{ D^b_{dg}(MHS^{A})}( R^{Hodge}_A(M(X)), A(1))$ in degrees $i\leq 2$ are also canonically identified with\footnote{
  At the beginning of \S 5.4 Beilinson seems to claim that the cohomology groups of the above complex in degrees $>2$ vanish. This is not true: for example, if $i>3$ the $i^{th}$ cohomology group is isomorphic to
  the Betti cohomology group $H^{i-1}(X, A\otimes\bQ/A)$. In a sense, the reason for introducing the category of enriched Hodge structures is precisely to deal with this problem.}   $H^{i-1}_{\text{Zar}}(X, \cO_{X}^* )\otimes A$.
  Thus, to finish the proof it suffices to check the commutativity of the following diagram (for $i=1,2$).
     \begin{equation}\label{chern}
\def\normalbaselines{\baselineskip20pt
\lineskip3pt  \lineskiplimit3pt}
\def\mapright#1{\smash{
\mathop{\to}\limits^{#1}}}
\def\mapdown#1{\Big\downarrow\rlap
{$\vcenter{\hbox{$\scriptstyle#1$}}$}}
\begin{matrix}
 Hom_{DM^{\eff}_{\gm} (\bC;  A)}(M(X), A(1)[i])  &   \rar{} &Hom_{ D^b(MHS^{A})}( R^{Hodge}_A(M(X)), A(1)[i])   \cr
   \mapdown{\simeq}  & & \mapdown{\simeq}\cr
 H^{i-1}_{\text{Zar}}(X, \cO_{X}^* )\otimes A   &  \rar{Id}  & H^{i-1}_{\text{Zar}}(X, \cO_{X}^* )\otimes A.
\end{matrix}
 \end{equation}
 If $i=1$ the assertion is clear because the functoriality of the isomorphisms of Voevodsky and Beilinson reduces the claim to the case when $X$ is a point. For $i=2$ we observe that because $X$ is projective
 the Picard group  $H^{1}_{\text{Zar}}(X, \cO_{X}^* )$ is generated by classes of very ample line bundles. Thus,  we reduce the statement to the case $X= \bP^n$, where it is immediate.   
  \end{proof}
 \begin{rem} Unless $A=\bQ$,  Lemma \ref{Le} does not hold with  $MHS^{A, \en}_{\eff}$ replaced by $MHS^{A}_{\eff}$.
    \end{rem}
 \subsection{Hodge Albanese functor.}\label{s.one.en.hs.}   
Let $ MHS_1^A $ be 
the full subcategory of  the category $MHS^{A}$ that consists of torsion-free objects of type $\{(0,0),(0,-1),(-1, 0),(-1,-1)\}$. 
We consider the exact structure on $ MHS_1^A $ induced by its embedding  into the exact category $MHS_{\eff}^{A, en}$ and denote by 
  $D^b_{dg}(MHS_1^A)$  the corresponding derived DG category.
 
 
 \begin{pr}\label{halb} The functor  $$D^b_{dg}(MHS^{A}_1)\to D_{dg}^b(MHS^{A, \en}_{\eff})$$ induced by the embedding $MHS_1^A \mono MHS^{A, \en}_{\eff}$  is homotopically fully faithful and has a left 
adjoint DG quasi-functor
 \begin{equation}\label{hadjoint}
\overline{\LAlb}_A:  D_{dg}^b(MHS^{A, \en}_{\eff}) \to D^b_{dg}(MHS^{A}_1).
 \end{equation}
 We call $\overline{\LAlb}_A$  the Hodge Albanese functor. 
\end{pr}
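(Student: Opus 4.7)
The plan is to prove fully faithfulness along the lines of the preceding lemma (via Positselski's Proposition~A.7), and to construct the left adjoint in two stages: first on pure effective Hodge structures of each weight, and then extending to the derived DG category via the weight-structure and filtered-complex formalism developed in Section~\ref{h.a}.

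For fully faithfulness, I would invoke Proposition~A.7 of \cite{p} exactly as in the previous lemma. The input to check is that $MHS^A_1$ is closed under extensions in $MHS^{A,\en}_{\eff}$. Given a short exact sequence $0 \to V^0 \to V^1 \to V^2 \to 0$ in $MHS^{A,\en}_{\eff}$ with $V^0, V^2 \in MHS_1^A$, the weight-split property yields $\on{Gr}^i_W V^1 \simeq \on{Gr}^i_W V^0 \oplus \on{Gr}^i_W V^2$ for every $i$, so every graded piece has 1-motivic Hodge types and is torsion-free, and $V^1$ itself is torsion-free with weights in $\{0,-1,-2\}$. The $\Ext$-bound needed to apply Prop.~A.7 follows from Lemma~\ref{posiclm} combined with Beilinson's bound of $1$ on the homological dimension of $MHS^A$.

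For the left adjoint on pure objects, for each $i \in \{0,-1,-2\}$ I would write $HS^{A,\eff}_i$ for the category of pure effective polarizable weight-$i$ Hodge structures, and $HS^A_{1,i} \subset HS^{A,\eff}_i$ for the subcategory of those with Hodge type in the 1-motivic set $T_i$ (respectively $\{(0,0)\}$, $\{(0,-1),(-1,0)\}$, or $\{(-1,-1)\}$). Polarizability makes the $\bQ$-category of pure polarizable weight-$i$ Hodge structures semisimple, so $V_\bQ$ decomposes as a direct sum of simple summands; let $V^{\mathrm{bad}}_\bQ$ be the sum of those whose Hodge type lies outside $T_i$. Set $V^{\mathrm{bad}} := V \cap V^{\mathrm{bad}}_\bQ$, which is automatically saturated in $V$, and define $L_i(V) := V/V^{\mathrm{bad}}$. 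This is torsion-free and of 1-motivic Hodge type, and any morphism from $V$ to an object $W$ of $HS^A_{1,i}$ annihilates $V^{\mathrm{bad}}$, giving the adjunction $\Hom(V, W) \simeq \Hom(L_i V, W)$ at once. For $i \leq -3$ I would take $L_i = 0$.

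Finally, to extend $\bigoplus_i L_i$ to a DG quasi-functor on the derived DG categories, I would use Lemma~\ref{posiclm} to observe that $\Hom$-complexes in $D^b_{dg}(MHS^{A,\en}_{\eff})$ between pure generators $V\{i\}$ are controlled by weight-shifted $\Hom$-complexes in $D^b_{dg}(MHS^A)$, and an analogous description holds for $D^b_{dg}(MHS^A_1)$. The pure-weight functors $L_i$ assemble into a DG quasi-functor on the DG subcategory of pure generators --- exactness of $L_i$ on polarizable pure Hodge structures (immediate rationally by semisimplicity, integrally controlled by saturation) ensures compatibility with the morphism complexes. This quasi-functor extends uniquely by pretriangulated completion (Section~\ref{s.dgcat}) to the desired $\overline{\LAlb}_A$, and the right adjunction datum is verified on pure generators where it reduces to the pointwise adjunction above. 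The main obstacle is precisely this last gluing step: one must verify that the pure-weight adjoints assemble into a genuine DG quasi-functor compatible with the enriched (filtered) exact structure and that the adjunction datum lifts from the pure level. This is a careful bookkeeping exercise enabled by the explicit description of $\Ext$ in Lemma~\ref{posiclm} and the universal property of the pretriangulated completion.
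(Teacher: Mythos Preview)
Your argument for fully faithfulness matches the paper's.

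For the left adjoint there is a genuine gap. The category $MHS^{A,\en}_{\eff}$ contains \emph{torsion} pure objects in every weight $<0$: e.g.\ $A/mA$ placed in weight $-1$ (that is, $W_{-1}=A/mA$, $W_{-2}=0$) is a perfectly good object, since effectivity only demands that $\on{Gr}^0_W$ be torsion-free. Your formula $L_{-1}(V)=V/V^{\mathrm{bad}}$ gives $L_{-1}(A/mA)=A/mA$, which is not torsion-free and hence not in $MHS^A_1$. Nor can these torsion generators be sidestepped: the functor $V\mapsto (\on{Gr}^{-1}_W V)_{\mathrm{tors}}$ is exact for the enriched exact structure (the graded pieces of an exact sequence are \emph{split}), so it induces a triangulated functor $D^b(MHS^{A,\en}_{\eff})\to K^b(\text{finite }A\text{-mod})$ which annihilates every torsion-free pure object but sends $(A/mA)\{1\}$ to $A/mA\neq 0$. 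Thus $(A/mA)\{1\}$ does not lie in the triangulated subcategory generated by torsion-free pure Hodge structures, and you are forced to handle it directly.

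The paper's proof avoids the weight-by-weight strategy altogether: it simply checks, for each pure generator $V$, that the functor $V'\mapsto \Hom_{D^b(MHS^{A,\en}_{\eff})}(V,V')$ on $D^b(MHS^A_1)$ is representable. For the torsion object $A/mA$ in weight $-1$ the representing object turns out to be the two-term complex $A(1)\xrightarrow{m}A(1)$ in degrees $-1,0$, which lives entirely in weight $-2$. So the left adjoint genuinely mixes weights on torsion input, and no ``assemble the pure-weight adjoints $L_i$'' argument can work without a substantial modification to treat this case.
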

\begin{proof} The first claim follows from the fact that the subcategory $MHS^{A}_1\subset MHS^{A, \en}_{\eff}$ is closed under extensions and from the vanishing of 
$Hom_{D^b(MHS^{A, \en}_{\eff})}(V,V'[i])$,  for $i>1$ and $V,V' \in MHS^{A, \en}_{\eff}$.
For the existence of a left adjoint functor it suffices to check that,  there exists a set $\cS$ of generators of  $D^b(MHS^{A, \en}_{\eff})$ such that, for every $V\in \cS$ the functor 
$$\Phi_V: D^b(MHS^{A}_1) \to Mod_A, \quad \Phi_V(V')=Hom_{D^b(MHS^{A, \en}_{\eff})}(V,V')$$
to the category of $A$-modules
is representable by an object of $D^b(MHS^{A, \en}_{1})$. We take for $\cS$ the set of pure enriched Hodge structures. If $V$ has weight $ <-2$, then  $\Phi_V$ is $0$. Let $V$ be a pure polarizable Hodge structure of weight $-2$. There exists a unique decomposition  $V\otimes _A \bQ $
into the direct sum of a Hodge-Tate substructure and a
substructure $P\subset V\otimes _A\bQ  $ that has no Hodge-Tate
subquotients. Then $\Phi_V$ is representable by $Im(V \to  V\otimes _A\bQ/P)$.
Next, every pure weight $-1$ Hodge structure $V$ is the direct sum of a torsion free Hodge structure $V_f$ and  Hodge structures
of the form $A/mA$, $m\in \bZ$  (with $W_{-1}(A/mA)=A/mA$, $W_{-2}(A/mA)=0$). The functor $\Phi_{V_f}$ is representable by $V_f$ and the functor 
$\Phi_{A/mA}$ is representable by  the complex $A(1)\rar{m} A(1)$ supported in degrees $-1$ and $0$. 
Finally, every effective weight $0$ Hodge structure is already in $ MHS_1^A $.
\end{proof}
 
\section{The Albanese functor}
\subsection{DG category of 1-motives.}\label{s.alb.f.comp.} 
Denote by ${\cM}_1(k)$ the category of
1-motives over a perfect field $k$. Thus, an object of ${\cM}_1(k)$ is a complex of group schemes
                        $$M=[\Lambda \rar{u} G],  $$
      where $\Lambda $ is a $k$-lattice viewed as a group scheme over $k$ 
         and $G$ is a semi-abelian $k$-scheme.  Morphisms between 1-motives are given by commutative squares.
          We set 
$${\cM}_1(k; A): ={\cM}_1(k) \otimes A.$$
For an extension $k\subset k'$, we denote by 
 $$f^*_{k'}:  {\cM}_1(k; A)  \to {\cM}_1(k'; A)$$
 the corresponding pull-back functor.

Every object $M=[\Lambda \rar{u} G]$ of ${\cM}_1(k; A)$ is equipped with a canonical filtration
$$W_{-2} M\subset W_{-1}M \subset W_0M=M, \quad
W_{-2}M= [0\to T], \quad W_{-1} M= [0\to G],
$$
where $T\subset G$ is the toric part $G$.
We refer to $W_iM$ as the {\it weight filtration} on $M$.
 
For an abelian group scheme $H$ over $k$ we denote by   $\underline H$ the corresponding presheaf  of $A$-modules on the category $Sm_k$ of smooth schemes over $k$:
 $$\underline{H}(X): = Hom(X, H)\otimes A.$$
Recall from (\cite{bk}, Lemma 1.3.2) that if the neutral component $H^0$ is quasi-projective then the presheaf
$\underline H $ has a unique structure of a presheaf
with transfers. Using this fact we define a DG functor
     \begin{equation}\label{eqexst}
Tot_A: C(  \cM_1(k, A)) \to  C(PSh^{k, A}_{tr})
 \end{equation}
from the DG category of complexes over $\cM_1(k, A)$ to the DG category of  complexes over the category $PSh^{k, A}_{tr}$ of presheaves with transfers on $Sm_k$ 
sending  a 1-motive $[\Lambda \rar{u} G] \in \cM_1(k, A) \subset  C^b(  \cM_1(k, A))$ to the complex 
$$\underline \Lambda \stackrel{u}{\longrightarrow} \underline G,$$
where  $\underline \Lambda$ is placed in degree $0$ and $\underline G$ in degree $1$.

We introduce an exact structure on $\cM_1(k, A)$ postulating  that a sequence 
\begin{equation}\label{seq}
0\to M^0 \to M^1 \to M^2 \to 0
\end{equation}
 is exact if  the sequence of pure 1-motives  over an algebraic closure $k\subset \ok$   
$$0\to  Gr^i _W f^*_\ok M^0 \to Gr^i _W f^*_\ok M^1 \to Gr^i _W f^*_\ok M^2\to 0, \quad i=0,-1, -2 $$ {\it splits}.
The axioms of exact category (see, {\it e.g.} \cite{p}, Appendix A) are immediate.
\begin{rem}\label{remandisc} A bounded complex of 1-motives $M^{\cdot}\in C^b(  \cM_1(k, A))$ is exact with respect to the above exact structure  if and only if
 $Tot_A(f^*_\ok M^\cdot) \in C^b(PSh^{\ok, A}_{tr})$ is acyclic locally for the {\it Zariski} topology on $Sm_\ok$.
\end{rem}
Denote by  $D^b_{dg}({\cM}_1(k; A))$ the derived DG category  of  the exact category ${\cM}_1(k; A)$ and by
$D^b_{dg}(\w_{\leq i}{\cM}_1(k; A))$, $( i=0,-1, -2)$,  the derived DG category  of  the subcategory $\w_{\leq i}{\cM}_1(k; A) \subset {\cM}_1(k; A)$ consisting of 1-motives whose weights are $\leq i$. 
As the functor $W_i: \cM_1(k, A) \to \w_{\leq i}\cM_1(k, A)$ which takes a 1-motive $M$ to $W_iM$ is exact it determines a quasi-functor
\begin{equation}\label{weightfeq}
W_i: D^b_{dg}({\cM}_1(k; A))\to D^b_{dg}(\w_{\leq i}{\cM}_1(k; A)), \quad  i=0,-1, -2,
\end{equation}
which is right adjoint to the embedding
$$D^b_{dg}(\w_{\leq i}{\cM}_1(k; A))\mono D^b_{dg}({\cM}_1(k; A)).$$
We will write
\begin{equation}\label{weightfgreq}
\on Gr^i_W:  D^b_{dg}({\cM}_1(k; A))\to D^b_{dg}(\w_{= i}{\cM}_1(k; A)), \quad  i=0,-1, -2,
\end{equation}
for the DG quasi-functor to the derived DG category of the subcategory $\w_{= i}{\cM}_1(k; A)\subset {\cM}_1(k; A)$  of pure 1-motives of weight $i$, which carries a complex $M^{\cdot}$ to  $\on Gr^i_W M^{\cdot}$.

\begin{rem} According to Deligne (\cite{d3})  the Hodge realization functor:
$$ \T^{Hodge}_A: {\cM}_1(\bC; A) \iso MHS^A_1$$
is an equivalence of categories.  If we endow $MHS^A_1$ with an exact structure induced by the embedding $MHS_1^A \mono MHS^{A, \en}_{\eff}$ 
functor  $ \T^{Hodge}_A$ is an equivalence of exact categories. In particular, it yields a quasi-equivalence of the corresponding derived DG categories:  
$$ \T^{Hodge}_A: D^b_{dg}({\cM}_1(\bC; A)) \iso D^b_{dg}(MHS^{A}_1).$$
 \end{rem}
\begin{rem}\label{bke} For a perfect field $k$, whose characteristic is either $0$ or invertible in $A$, Barbieri-Viale and Kahn (\cite{bk}, \S 1.4) consider another exact structure on ${\cM}_1(k; A) $ in which sequence (\ref{seq}) is exact if 
the complex $Tot_A( M^{\cdot}) $  is acyclic locally for the \'etale topology on $Sm_k$.  This exact structure has more exact sequences than the one introduced above.
For example,  the sequence
$$0\to [\bZ \to 0] \rar{\alpha}  [\bZ \rar{u} \bG_m]  \rar{\beta} [0\to \bG_m]\to 0,$$
where $u$ takes $1\in \bZ$ to a primitive $n^{th}$ root of unity in $k^*$ and $\alpha$ (resp. $\beta$) acts on $\bZ$ (resp. on $\bG_m$) as the multiplication by $n$, is exact in the sense of Barbieri-Viale and Kahn  but,
unless $1/n \in A$, this sequence is not exact in our sense.  For $k=\bC$   
Barbieri-Viale-Kahn's exact structure is induced by the embedding ${\cM}_1(\bC; A) \iso MHS^A_1\mono   MHS^A$ into the abelian category of mixed Hodge structures.
 \end{rem}
We compute $\Ext$ groups between irreducible objects in  $\cM_1(k, A)$.
\begin{pr}\label{comp} 
Let $M_0=[\Lambda \to 0], M_1= [0 \to Y], M_2=[0\to T] $ be pure 1-motives of weights $0$, $-1$ and $-2$ respectfully. 
 \begin{enumerate}[(a)] 
 \item If $i>j$ then 
   $\Ext^n_{D^b( \cM_1(k, A))}(M_i,M_j)=0$ for every $n$.
 \item  For $i=0,1,-2$, we have 
  $$\Ext^n_{D^b( \cM_1(k, A))}(M_i,M_i)\iso  H^n_{\et}(\spec k,  Hom_{ \cM_1(\ok, A)}(f^*_\ok M_i, f^*_\ok M_i)),$$
  where  $H^*_{\et}(\spec k, ?)$ denotes the Galois cohomology with coefficients in a discrete module. 
 \item We have natural isomorphisms
 $$Ext^n_{D^b( \cM_1(k, A))}(M_0,M_1) \iso H^{n-1}_{\et}(\spec k,  Y (\ok) \otimes \Lambda^*(\ok)\otimes A),$$
$$   Ext^n_{D^b( \cM_1(k, A))}(M_0,M_2) \iso H^{n-1}_{\et}(\spec k, T(\ok) \otimes \Lambda^*(\ok) \otimes A),$$
$$  Ext^n_{D^b( \cM_1(k, A))}(M_0,M_2) \iso H^{n-1}_{\et}(\spec k, Y^\circ (\ok) \otimes \Xi_T(\ok)\otimes A),$$
where $Y^\circ$ is the dual abelian variety and $\Xi_T$ is the $k$-lattice of cocharacters of $T$ ({\it i.e.}, homomorphisms from $\bG_m$ to $T$).
\end{enumerate}
\end{pr}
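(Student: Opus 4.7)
The plan is to reduce the computation to explicit calculations over $\ok$ via a Hochschild--Serre type spectral sequence
$$E_2^{p,q} = H^p_{\et}(\spec k, \Ext^q_{D^b(\cM_1(\ok, A))}(f^*_\ok M, f^*_\ok M')) \Rightarrow \Ext^{p+q}_{D^b(\cM_1(k, A))}(M, M').$$
Such a spectral sequence is natural in our setup because the exact structure on $\cM_1(k, A)$ is defined precisely via base change to $\ok$ (compare Remark \ref{remandisc}). To produce it I would use the embedding $\iota:D^b(\cM_1(k, A))\to DM^{\eff}_{\gal}(k, A)$ constructed in \S\ref{s.embedd} together with the description of $\Ext$ groups in $DM^{\eff}_{\gal}(k, A)$ via Galois cohomology from \S\ref{s.gm}, first verifying that $\iota$ induces isomorphisms on $\Ext$ between pure 1-motives.

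The computations over $\ok$ proceed as follows. The $\Hom$ groups (i.e.\ $q=0$) vanish for $i\neq j$ since there are no nonzero morphisms between pure group schemes of different types (no maps from torus to AV, from AV to lattice, etc.), and they equal $\End(f^*_\ok M_i)\otimes A$ for $i=j$. For $\Ext^1$ one distinguishes three cases. When $i=j$, the requirement that the graded piece split forces every short exact sequence to split, so $\Ext^1=0$. When $i>j$, the sequence $0\to f^*_\ok M_j\to X\to f^*_\ok M_i\to 0$ has higher-weight subobject and lower-weight quotient; inspecting the morphism conditions of the relevant complexes of group schemes, together with the splitting of graded pieces, forces $X$ to be split, so $\Ext^1=0$. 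When $i<j$, extensions correspond precisely to the cross-weight gluing data: one obtains $\Hom(\Lambda, Y)\otimes A$, $\Hom(\Lambda, T)\otimes A$, and (via the Weil--Barsotti isomorphism $\Ext^1(Y,\bG_m)\simeq Y^\circ$ combined with $T\simeq\bG_m\otimes\Xi_T$ over $\ok$) $Y^\circ(\ok)\otimes\Xi_T(\ok)\otimes A$, respectively. Finally, for $q\geq 2$ every Yoneda $q$-extension can be factored through the two-step weight filtration of the intermediate terms; combined with the $\Ext^{\leq 1}$ vanishings above, a straightforward d\'evissage forces $\Ext^q=0$.

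Given these vanishings, the spectral sequence collapses to a single row: for (a) all $E_2^{p,q}$ vanish and hence $\Ext^n_k=0$; for (b) only the row $q=0$ survives, giving $H^n_{\et}(\spec k,\Hom_{\cM_1(\ok,A)}(f^*_\ok M_i, f^*_\ok M_i))$; for (c) only the row $q=1$ survives, giving $H^{n-1}_{\et}(\spec k, \Ext^1_{\cM_1(\ok,A)})$ which matches the stated formulas. The main obstacle is justifying the descent spectral sequence, in particular verifying the required compatibility of $\iota$ with $\Ext$ groups on pure objects. An alternative route, avoiding $\iota$, is to realize $D^b(\cM_1(k, A))$ inside the derived category of Nisnevich sheaves with transfers via $Tot_A$ (see Remark \ref{remandisc}) and apply standard descent along $\spec\ok\to\spec k$ there; the over-$\ok$ computations are then essentially elementary once one carefully parses the exact structure.
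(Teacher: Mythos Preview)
Your overall strategy --- reduce to $\ok$ via a Hochschild--Serre spectral sequence and then compute directly over the algebraically closed field --- matches the paper's, and your computations over $\ok$ are essentially correct (the paper argues similarly, with an explicit lifting argument for $\Ext^2(M_0,M_2)=0$ rather than a general d\'evissage).

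The genuine gap is in how you propose to build the spectral sequence. You want to pass through $\iota: D^b(\cM_1(k,A))\to DM^{\eff}_{\gal}(k;A)$ and then invoke the Galois-cohomology description of $\Ext$ in the target. But this only computes $\Ext$ groups in $DM^{\eff}_{\gal}$, not in $D^b(\cM_1)$; to transfer them back you need $\iota$ to be fully faithful on the objects in question. The paper does \emph{not} prove this --- it explicitly remarks that full faithfulness of $\iota$ is only a conjecture --- and in fact the proof of Theorem~\ref{th2} (which concerns $\iota$ and its adjoint) \emph{uses} the present proposition. So your primary route is circular. Your alternative via $Tot_A$ is closer in spirit but would still require checking that the embedding into presheaves with transfers is fully faithful on these Ext groups, which is not obviously easier.

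The paper avoids all of this by constructing the spectral sequence \emph{internally} to $D^b(\cM_1)$. For each finite Galois extension $k\subset k'$, the pullback $f^*_{k'}$ has an exact right adjoint $f_{k'*}$ given by Weil restriction, so both pass to the derived category. Writing $\Omega=f_{k'*}f^*_{k'}$, the bar-type complex $M_j\to\Omega M_j\to\Omega^2 M_j\to\cdots$ is exact (its pullback to $k'$ is contractible), and applying $\Hom(M_i,-)$ identifies the resulting double complex with the standard cochain complex computing $H^*(\Gal(k'/k),\Hom_{D^b(\cM_1(k'))}(f^*_{k'}M_i,f^*_{k'}M_j))$. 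Passing to the limit over $k'$ gives the desired $E_2^{pq}=H^p_{\et}(\spec k,\Ext^q_{\ok})$ with no appeal to any external category.
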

\begin{proof}
It suffices to prove the proposition for $A=\bZ$. Assume, first, that $k$ is algebraically closed. Then, arguing as in the proof of Lemma \ref{posiclm}, we see that
$\Ext^n_{D^b( \cM_1(k))}(M_i,M_j)=0$ for $n>j-i$.  Thus, it remains to check that $\Ext^2_{D^b( \cM_1(k))}(M_0,M_2)=0$.
Every element of $\Ext^2_{D^b( \cM_1(k))}(M_0,M_2)$ lies in the image of the Yoneda product map 
\begin{equation}\label{yonedaprod}
\Ext^1_{D^b( \cM_1(k))}(M_0,M_1) \otimes \Ext^1_{D^b( \cM_1(k))}(M_1,M_2)\to \Ext^2_{D^b( \cM_1(k))}(M_0,M_2)
\end{equation}
for some pure 1-motive $M_1= [0 \to Y]$ of weight one. Let us show that (\ref{yonedaprod}) is identically zero. Indeed, given $\alpha \in \Ext^1_{D^b( \cM_1(k))}(M_0,M_1) $ and $\beta \in \Ext^1_{D^b( \cM_1(k))}(M_1,M_2)$ we need to check
that there exists a 1-motive $M$ such that  $\on Gr^i _W M= M_{-i}$ and such that the extension classes $[M/W_{-2}M] \in \Ext^1_{D^b( \cM_1(k))}(M_0,M_1)$, $[W_{-1}M] \in \Ext^1_{D^b( \cM_1(k))}(M_1,M_2)$ are equal to $\alpha$ and $\beta$
respectfully. Let $0\to T \to G \to Y \to 0$ be an extension whose class equals $\beta$ and $ [\Lambda \rar{u} Y]$  be an extension whose class equals $\alpha$. Since $k$ is algebraically closed there exists a morphism 
$\tilde u: \Lambda \to G$ that lifts $u$. We take $M= [\Lambda \rar{u} G]$.

In general, for every field $k$ and a finite Galois extension $k\subset k'$ the functor $f^*_{k'}$ has a right adjoint functor $f_{k' *}:  \cM_1(k') \to \cM_1(k)$ 
that carries a 1-motive $[\Lambda \to G]$ to $[R_{k'/k} \Lambda \to R_{k'/k} G]$, where $R_{k'/k}$ is the Weil restriction functor.
Since both $f^*_{k'}$ and $f_{k' *}$ are exact they define a pair of adjoint quasi-functors:
$$f^*_{k'}: D^b_{dg}( \cM_1(k)) \to D^b_{dg}( \cM_1(k')), \quad f_{k' *}:  D^b_{dg}( \cM_1(k')) \to D^b_{dg}( \cM_1(k)).$$
Consider the complex $\Hom_{D^b_{dg}( \cM_1(k))}(M_i,  f_{k' *} f^*_{k'} M_j)$. On the one hand, by the adjunction property, this complex is quasi-isomorphic to the complex  $\Hom_{D^b_{dg}( \cM_1(k'))}(f^*_{k'} M_i,   f^*_{k'} M_j)$.
On the other hand, the action of $\Gal(k'/k)$ on $f_{k' *} f^*_{k'} M_j$ defines a $\Gal(k'/k)$-action on $\Hom_{D^b_{dg}( \cM_1(k))}(M_i,  f_{k' *} f^*_{k'} M_j)$.
We claim that the complex $\Hom_{D^b_{dg}( \cM_1(k))}(M_i,  M_j)$ is quasi-isomorphic to the standard complex that computes the group cohomology of $\Gal(k'/k)$ with coefficients in   $\Hom_{D^b_{dg}( \cM_1(k))}(M_i,  f_{k' *} f^*_{k'} M_j)$.
Indeed, writing $\Omega: = f_{k' *} \circ f^*_{k'}$ and $\Omega^n$ for  the $n$-th iterate of $\Omega$, we form a complex
\begin{equation}\label{triple}
M_j \rar{} \Omega M_j \rar{d_1} \Omega^2 M_j \to  \Omega^3 M_j \to \cdots,
\end{equation}
where the first map $M_j \to \Omega M_j $ is induced by the adjunction unit $Id \rar{\epsilon}\Omega $ and $d_n$ is the alternated sum of the morphisms 
$$ Id\otimes \cdots \otimes Id \otimes \epsilon \otimes Id \otimes \cdots \otimes Id:  \Omega^{m}\circ Id  \circ  \Omega^{n-m} \to \Omega^{m}\circ \Omega  \circ  \Omega^{n-m}.$$ 
Complex (\ref{triple}) is exact  (in fact,  $f^*_{k'}$ of this complex is homotopy contractible by Prop. 8.6.10 from \cite{w}). In particular, $\Hom_{D^b_{dg}( \cM_1(k))}(M_i,  M_j)$ is quasi-isomorphic to the complex $\Hom_{D^b_{dg}( \cM_1(k))}(M_i,  (\Omega^\cdot M_j, d_\cdot))$.
The functor $ f^*_{k'} \circ f_{k' *}$ takes every 1-motive $M$ the direct sum of $|\Gal(k'/k)|$ copies of $M$. This identifies  $\Hom_{D^b_{dg}( \cM_1(k))}(M_i,  \Omega^{n+1} M_j)$ with the complex of $n$-cochains on $\Gal(k'/k)$ with values in
 $\Hom_{D^b_{dg}( \cM_1(k))}(M_i,  \Omega M_j)$ and the total complex $\Hom_{D^b_{dg}( \cM_1(k))}(M_i,  (\Omega^\cdot M_j, d_\cdot))$ with 
the standard complex that computes the group cohomology of $\Gal(k'/k)$ with coefficients in   $\Hom_{D^b_{dg}( \cM_1(k))}(M_i,  \Omega M_j)$.
In particular, for every finite Galois extension $k\subset k'$  we get a spectral sequence converging to $\Ext^*_{D^b( \cM_1(k))}(M_i,M_j)$ whose second term is $$H^p_{\et}(\spec k, \Ext^q_{D^b( \cM_1(k'))}(f^*_{k'} M_i, f^*_{k'} M_j)).$$
For a larger finite Galois extension $k'\subset k^{\prime \prime}$ we have a morphism of the corresponding spectral sequences. Passing to the limit over all finite Galois subextensions  $k\subset k' \subset \ok$ and using that the Galois cohomology 
commutes with filtrant direct limits we get a spectral sequence converging to $\Ext^*_{D^b( \cM_1(k))}(M_i,M_j)$ with 
$$E_2^{pq}= H^p_{\et}(\spec k, \Ext^q_{D^b( \cM_1(\ok))}(f^*_{\ok} M_i, f^*_{\ok} M_j)).$$
Since, in our case, the complex $\Hom_{D^b_{dg}( \cM_1(\ok))}(f^*_{\ok} M_i,   f^*_{\ok} M_j)$ has nontrivial cohomology in a single degree the spectral sequence degenerates in the $E_2$ term and we finish the proof of the proposition.
\end{proof}
\subsection{Galois motives.}\label{s.gm}
For an extension $k\subset k'$ we shall write  
$$f^*_{k'}: D\cM^{\eff}(k;  A) \to D\cM^{\eff}(k';  A)$$
for the base change functor. We define the category
 $D\cM^{\eff}_{\gal} (k;  A)$ of {\it Galois motives} to be the DG quotient of $D\cM^{\eff} (k;  A)$ modulo the subcategory consisting of objects that become contractible in  $D\cM^{\eff} (\ok;  A)$ ({\it i.e.}, equal to $0$ in 
the triangulated category $DM^{\eff} (\ok;  A)$).   We will write $D\cM^{\eff}_{\gm, \gal} (k;  A)$ for the homotopy idempotent completion (\cite{bv}, \S1.6.2) of the image of $D\cM^{\eff}_{\gm} (k;  A)$ in  $D\cM^{\eff}_{\gal} (k;  A)$. 
$\Hom$ complexes in the category  $D\cM^{\eff}_{\gal} (k;  A)$ can be described as follows.
Fix a geometric motive $V \in D\cM^{\eff}_{\gm} (k;  A)$ and an arbitrary motive $V' \in D\cM^{\eff} (k;  A)$. 
Then, the assignment
$$\text{a finite extension}\,  k'  \supset k \quad  \mapsto \quad  \Hom_{D\cM^{\eff} (k';  A)}(f^*_{k'} (V), f^*_{k'} (V'))$$
specifies a complex of presheaves on the small \'etale site $(\spec k)_{\et}$ of $\spec k$. Letting $V'$ vary we get quasi-functor 
$$\tilde \Psi_V: D\cM^{\eff} (k;  A) \to D_{dg}(PSh(k))$$
to the derived DG category of presheaves on $(\spec k)_{\et}$.
Now, since $V$ is compact object of  $D\cM^{\eff}(k;  A)$, we have that
$$H^*\Hom_{D\cM^{\eff}(\ok;  A)}(f^*_{\ok} (V), f^*_{\ok} (V'))\iso \lim_{k\subset k' \subset \ok} H^*\Hom_{D\cM^{\eff} (k';  A)}(f^*_{k'} (V), f^*_{k'} (V')).$$
In particular, if $f^*_{\ok}(V')$ is contractible, then the complex of presheaves $\tilde \Psi_V(V')$ is locally (for the \'etale topology) acyclic {\it i.e.}, the sheafification of   $\tilde \Psi_V(V')$ is an acyclic complex of sheaves on $(\spec k)_{\et}$.
Hence, $\tilde \Psi_V$ determines a quasi-functor 
$$\Psi_V: D\cM^{\eff}_{\gal} (k;  A) \to D_{dg}(Sh_{\et}(k))$$
from the category of Galois motives to the derived DG category of sheaves on $(\spec k)_{\et}$.
Observe that $\Psi_V(V)$ has a canonical global  section
$$1\in H^0\Hom_{ D_{dg}(Sh_{\et}(k)) }(\bZ, \Psi_V(V))= R^0\Gamma_{\et}(\spec k, \Psi_V(V)) $$
corresponding to the identity morphism $V\to V$ in $D\cM^{\eff} (k;  A)$.
If $V' \in D\cM^{\eff}_{\gal} (k;  A)$ is a Galois motive, then
the morphism 
$$\Hom_{D\cM^{\eff}_{\gal}(k;  A)}(V, V')\to \Hom_{ D_{dg}(Sh_{\et}(k)) }(\Psi_V(V), \Psi_V(V'))$$
together with the  above global section of  $\Psi_V(V)$ specify a morphism 
\begin{equation}\label{nicecomp}
\Hom_{D\cM^{\eff}_{\gal}(k;  A)}(V, V')\to R\Gamma_{\et}((\spec k, \Psi_V(V')).
\end{equation}
\begin{pr}\label{compgm}
For every $V\in D\cM^{\eff}_{\gm, \gal} (k;  A)$ and  $V'\in D\cM^{\eff}_{\gal} (k;  A)$ morphism (\ref{nicecomp}) is a quasi-isomorphism.
\end{pr}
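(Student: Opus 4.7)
The plan is to mirror the bar-resolution strategy used in the proof of Proposition \ref{comp}.

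Since $V$ is compact in $D\cM^{\eff}(k;A)$, the stalk of $\Psi_V(V')$ at the geometric point satisfies
$$\Psi_V(V')_{\ok} \;\simeq\; \colim_{k \subset k' \subset \ok} \Hom_{D\cM^{\eff}(k';A)}(f^*_{k'}V, f^*_{k'}V'),$$
the colimit running over finite Galois subextensions. Identifying étale cohomology of $\spec k$ with continuous Galois cohomology of $\Gal(\ok/k)$ and using that the latter commutes with filtered colimits of discrete modules, I rewrite the right-hand side of (\ref{nicecomp}) as
$$\colim_{k'/k} R\Gamma(\Gal(k'/k), \Hom_{D\cM^{\eff}(k';A)}(f^*_{k'}V, f^*_{k'}V')).$$

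For each finite Galois $k'/k$ with $G = \Gal(k'/k)$, I consider the monad $\Omega = f_{k'*}\circ f^*_{k'}$ on $D\cM^{\eff}(k;A)$ and its bar cosimplicial resolution $V' \to \Omega^{\bullet+1}V'$; the triangle identities imply that this augmented object becomes split after applying $f^*_{k'}$. Using the Weil restriction isomorphism $f^*_{k'}f_{k'*}M \simeq \bigoplus_{g \in G}M$ and adjunction, I identify
$$\Hom_{D\cM^{\eff}(k;A)}(V, \Omega^{n+1}V') \;\simeq\; C^n(G, \Hom_{D\cM^{\eff}(k';A)}(f^*_{k'}V, f^*_{k'}V')),$$
the standard group-cochain complex, so that its totalization in $n$ is $R\Gamma(G, \Hom(f^*_{k'}V, f^*_{k'}V'))$. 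The splitting of the augmentation under $f^*_{k'}$ shows that the natural map $\Hom(V, V') \to R\Gamma(G, \Hom(f^*_{k'}V, f^*_{k'}V'))$ has cone annihilated by $f^*_{\ok}$, and hence becomes a quasi-isomorphism after localizing to $D\cM^{\eff}_{\gal}(k;A)$. Passing to the colimit over $k'/k$ and combining with the first paragraph yields (\ref{nicecomp}) as a quasi-isomorphism.

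The principal technical obstacle is the infinite cosimplicial totalization $\on{Tot}(\Omega^{\bullet+1}V')$, which need not exist as an object of $D\cM^{\eff}(k;A)$. Compactness of $V$ keeps $\Hom(V, \Omega^{\bullet+1}V')$ a bona fide cochain complex, and the localization to $D\cM^{\eff}_{\gal}(k;A)$ is precisely what promotes the augmentation to an equivalence (its cone being killed by $f^*_{\ok}$). An alternative formulation avoiding infinite totalizations is to verify that both functors in (\ref{nicecomp}) are cohomological in $V'$, commute with direct sums, and vanish on $\ker f^*_{\ok}$, so it suffices to compare them on a generating class of Galois motives — reducing the descent question to a single finite Galois extension where the bar computation above applies directly.
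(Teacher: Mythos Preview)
Your approach is genuinely different from the paper's. The paper introduces a second presheaf $\tilde\Phi_V(V')$ on $(\spec k)_{\et}$ sending a finite extension $k'$ to $\Hom_{D\cM^{\eff}_{\gal}(k';A)}(f^*_{k'}V, f^*_{k'}V')$ --- note that this is $\Hom$ in the \emph{Galois} category at each level, not in $D\cM^{\eff}$. The point is that $\tilde\Phi_V(V')$ is \'etale local (satisfies hyperdescent), so its value at $k$, which is $\Hom_{D\cM^{\eff}_{\gal}(k;A)}(V,V')$, already computes $R\Gamma_{\et}$ of its sheafification; and its sheafification agrees with $\Psi_V(V')$ because the stalks at $\spec\ok$ coincide. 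No bar resolution and no colimit over $k'/k$ enter the argument.

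Your bar-resolution argument has a gap at the sentence ``the natural map $\Hom(V,V')\to R\Gamma(G,\Hom(f^*_{k'}V,f^*_{k'}V'))$ has cone annihilated by $f^*_{\ok}$, and hence becomes a quasi-isomorphism after localizing.'' As written, that map is a morphism of complexes of $A$-modules, and $f^*_{\ok}$ does not act on its cone; you are conflating a cone of $A$-module complexes with a cone of motives. What the splitting of $f^*_{k'}(V'\to\Omega^{\bullet+1}V')$ actually gives is that the cone of $V'\to\on{Tot}(\Omega^{\bullet+1}V')$, \emph{as an object of $D\cM^{\eff}(k;A)$}, lies in $\ker f^*_{k'}$ --- but only if the infinite $\on{Tot}$ exists, which you correctly flag as the obstacle and do not resolve. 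Even granting existence, to deduce anything about $\Hom_{D\cM^{\eff}_{\gal}(k;A)}(V,V')$ you would then need to run the entire computation inside the Galois category: check that $f_{k'*}$ descends to the quotient, that the adjunction $(f^*_{k'},f_{k'*})$ persists there, and that the identification of $\Hom_{\gal}(V,\Omega^{n+1}V')$ with group cochains still holds after localization. None of this is addressed, and the colimit step at the end does not by itself bridge the gap between $\Hom_{D\cM^{\eff}}$ and $\Hom_{D\cM^{\eff}_{\gal}}$. The paper's route avoids all of these issues by building \'etale locality into the definition of $\tilde\Phi_V$ from the start.
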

\begin{proof}
Consider the functor
$$\tilde \Phi_V: D\cM^{\eff}_{\gal} (k;  A) \to D_{dg}(PSh(k)),$$
which carries a Galois motive $V'$ to the complex
 $$\text{a finite extension}\,  k'  \supset k \quad  \mapsto \quad  \Hom_{D\cM^{\eff}_{\gal} (k';  A)}(f^*_{k'} (V), f^*_{k'} (V'))$$
For every $V'$, the complex $\tilde \Phi_V(V')$ is \'etale local {\it i.e.}, it satisfies the decent property for  \'etale hypercoverings (\cite{bv}, 1.11).
In addition, we have a morphism $\tilde \Psi_V(V') \to \tilde \Phi_V(V')$ in $D_{dg}(PSh(k))$ that induces a quasi-isomorphism of the corresponding complexes of sheaves:
the fibers of  $\Psi_V(V')$ and $\Phi_V(V')$  at the geometric point $\spec \ok \to \spec k$
are quasi-isomorphic to $\Hom_{D\cM^{\eff} (\ok;  A)}(f^*_{\ok} (V), f^*_{\ok} (V'))$. Using (\cite{bv}, 1.11), it follows that
$$\Hom_{ D_{dg}(Sh_{\et}(k)) }(\bZ, \Psi_V(V))\iso \Hom_{ D_{dg}(Sh_{\et}(k)) }(\bZ, \Phi_V(V))\iso \Hom_{ D_{dg}(Sh_{\et}(k)) }(\bZ, \Psi_V(V')).$$
\end{proof}
We abbreviate the above computation in the formula
\begin{equation}
\Hom_{D\cM^{\eff}_{\gal}(k;  A)}(V, V')\iso R\Gamma_{\et}(\spec k,  Hom_{D\cM^{\eff} (\ok;  A)}(f^*_{\ok} (V), f^*_{\ok} (V'))).
\end{equation}

\subsection{Motivic Albanese functor.}\label{s.embedd}
Using Remark \ref{remandisc}, the functor $Tot_A: C^b(\cM_1(k; A)) \to C^b(PSh^{k, A}_{\tr})$ 
 determines a quasi-functor
\begin{equation}\label{bkemm}
\iota: D^b_{dg}({\cM}_1(k; A)) \to D\cM^{\eff}_{\gal} (k;  A).
\end{equation}
\begin{rem} We conjecture that $\Ho\, \iota$ is fully faithful. We do not know if $\iota$ factors through the subcategory $D\cM^{\eff}_{\gm, \gal} (k;  A)\mono D\cM^{\eff}_{\gal} (k;  A)$ of geometric Galois motives.
\end{rem}

We shall prove that $\iota$ has a left adjoint quasi-functor
$$\LAlb_A: D\cM^{\eff}_{\gm, \gal} (k;  A) \to D^b_{dg}({\cM}_1(k; A))$$
{\it i.e.}, a quasi-functor  together a morphism 
$$\nu: Id \to \iota \circ \LAlb_A: D\cM^{\eff}_{\gal, \gm} (k;  A) \to D\cM^{\eff}_{\gal} (k;  A)$$
such that, for every $V\in  D\cM^{\eff}_{\gm, \gal} (k;  A)$ and $M\in D^b_{dg}({\cM}_1(k; A))$, the composition 
$$H^*\Hom_{D^b_{dg}({\cM}_1(k; A))}(\LAlb_A(V), M) \to  H^*\Hom_{ D\cM^{\eff}_{\gal} (k;  A) }(\iota \circ \LAlb_A(V),  \iota(M)) $$
$$\rar{\nu^*} H^*\Hom_{D\cM^{\eff}_{\gal} (k;  A)}(V, \iota(M)) $$
is an isomorphism.

Let  
\begin{equation}\label{foradjointf}
\iota^*:  \underleftarrow{D\cM^{\eff}_{\gal} (k;  A)}\to \underleftarrow{D^b_{dg}({\cM}_1(k; A))}
\end{equation}
 be the restriction quasi-functor between the corresponding categories of left DG modules (see \S \ref{s.dgcat}). We shall use the same notation
\begin{equation}\label{foradjointfgm}
\iota^*:  D\cM^{\eff}_{\gm, \gal} (k;  A) \to \underleftarrow{D^b_{dg}({\cM}_1(k; A))}
\end{equation}
for the composition of (\ref{foradjointf})  with Yoneda embedding $D\cM^{\eff}_{\gm, \gal} (k;  A)  \to \underleftarrow {D\cM^{\eff}_{\gal} (k;  A)}$. 
Thus, for an object $V\in  D\cM^{\eff}_{\gm, \gal} (k;  A)$ the DG module $\iota^*(V) \in \underleftarrow{D^b_{dg}({\cM}_1(k; A))}$ carries an object $M\in D^b_{dg}({\cM}_1(k; A))$ to 
$$Hom_{D\cM^{\eff}_{\gal} (k;  A)}(V, \iota(M)).$$ We refer to $\iota^*$ as the formal left adjoint functor
of $\iota$.  The next theorem asserts that (\ref{foradjointfgm}) factors through $D^b_{dg}({\cM}_1(k; A))$ and, thus, defines $\LAlb_A$.
\begin{Th}\label{th2} Assume that either $\Char k=0$ or that $\Char k$ is invertible in $A$.  Then the following assertions are true.
 \begin{enumerate}[(a)] 
\item
For every motive $V\in  D\cM^{\eff}_{\gm, \gal} (k;  A)$, the DG module $\iota^*(V) $ is quasi-representable {\it i.e.},  $\iota^*(V) $ is homotopy equivalent to an object lying in the image of the Yoneda embedding 
$D^b_{dg}({\cM}_1(k; A))  \to \underleftarrow {D^b_{dg}({\cM}_1(k; A))}$. 
\item The functor $\LAlb_A$ commutes with an arbitrary base field change $k\subset k'$:
$$ \LAlb_A\circ f_{k'}\iso f_{k'}\circ \LAlb_A.$$
\end{enumerate}
\end{Th}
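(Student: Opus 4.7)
The plan is to establish (a) by reducing to motives of smooth projective varieties via the Chow weight structure and then constructing $\LAlb_A(M(X))$ geometrically, and to deduce (b) from uniqueness of adjoints together with the compatibility of $\iota$ with base change.

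For the reduction, let $\cS \subset D\cM^{\eff}_{\gm, \gal}(k; A)$ be the full subcategory consisting of those $V$ for which $\iota^*(V)$ is quasi-representable. This $\cS$ is triangulated (since $D^b_{dg}(\cM_1(k; A))$ is pretriangulated, the representing object of a cone is the cone of the representing objects) and closed under retracts (the exact category $\cM_1(k; A)$ is idempotent complete, hence so is $D^b(\cM_1(k; A))$). The hypothesis on $\Char k$ together with Bondarko's theorem (cited in \S \ref{chow}) ensures that $D\cM^{\eff}_{\gm}(k; A)$ carries the Chow weight structure, so its homotopy category is generated by motives of smooth projective varieties; the same is then true of the idempotent completion $D\cM^{\eff}_{\gm, \gal}(k; A)$. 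Hence it suffices to verify quasi-representability of $\iota^*(M(X))$ for $X$ smooth projective over $k$.

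For such $X$, I would construct $\LAlb_A(M(X))$ explicitly as a bounded complex in $D^b_{dg}(\cM_1(k; A))$ whose three successive weight-graded pieces are built from the scheme of connected components $\pi_0(X)$ (contributing weight $0$), the Albanese variety $\on{Alb}(X)$ (contributing weight $-1$), and the torus dual to the algebraic part of the Picard scheme (contributing weight $-2$); these all carry natural $\Gal(\ok/k)$-descent data making them objects over $k$. The universal morphism $M(X) \to \iota(\LAlb_A(M(X)))$ in $D\cM^{\eff}_{\gal}(k; A)$ is assembled from the classical Albanese map, the component map, and the cycle class map. To verify it is a unit of adjunction I would use Proposition \ref{compgm} to reduce to $k = \ok$; on the motivic side the $\Hom$-groups decompose weight-by-weight, and on the $\cM_1$-side the analogous decomposition is furnished by Proposition \ref{comp}, so the matching comes down to the classical universal properties of $\pi_0$, Albanese, and dual Picard, together with the Lefschetz $(1,1)$ theorem for the weight $-2$ comparison (cf.\ Lemma \ref{Le}).

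Part (b) is then formal: the quasi-functor $\iota$ commutes with base change, $f^*_{k'} \circ \iota \iso \iota \circ f^*_{k'}$, which is visible from the definition of $Tot_A$ in terms of presheaves represented by algebraic groups, since these pull back functorially. Passing to left adjoints (and using uniqueness of adjoints in $DGcat$) yields the natural isomorphism $\LAlb_A \circ f^*_{k'} \iso f^*_{k'} \circ \LAlb_A$. The main obstacle will be the explicit construction of $\LAlb_A(M(X))$ at the weight $-2$ level with integral coefficients. The weaker exact structure on $\cM_1(k; A)$ introduced in \S \ref{s.alb.f.comp.} --- requiring splitting only after base change to $\ok$, rather than \'etale-locally as in \cite{bk} --- is precisely designed to bypass the obstruction to an integral left adjoint noted there; one must verify, with this weaker structure, that the representing object at weight $-2$ actually exists and that the unit map is an isomorphism on all $\Ext$-groups, including in the presence of torsion.
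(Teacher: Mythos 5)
Your part (a) follows the same skeleton as the paper (reduction via the Chow weight structure to $M(X)$ for $X$ smooth projective, then building the answer out of $\pi_0(X)$, ${\bf Alb}(X)$ and a weight $-2$ piece), but the step you defer as "the main obstacle" is exactly where the paper does its real work, so as written the proof is incomplete at its core. Concretely: after peeling off $M(\pi_0(X))$ and producing, from the universal property of the Albanese, a morphism $\alpha\colon \iota^*(\overline M(X))\to [0\to {\bf Alb}(X)][1]$, one shows using Propositions \ref{comp} and \ref{compgm} that $\cone\alpha$ annihilates all $1$-motives of weight $>-2$, and then one must exhibit a representing object in weight $\leq -2$. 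Because $NS_{X_{\ok}}$ may have torsion, this cannot be "the torus dual to the algebraic part of the Picard scheme": the paper takes a resolution $0\to\Lambda_1\to\Lambda_2\to NS_{X_{\ok}}\to 0$ by $k$-lattices and represents $\cone\alpha$ by the two-term complex of tori $NS^*_{X_{\ok}}(1)[3]$, the identification resting on the Galois-cohomological description of both sides (via Propositions \ref{comp} and \ref{compgm}) and on $Pic^0({\bf Alb}(X_{\ok}))\iso Pic^0(X_{\ok})$ --- not on the Lefschetz $(1,1)$ theorem, which enters only in the Hodge-theoretic statements (Lemma \ref{Le}, Theorem \ref{mth}). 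Note also that the paper never needs to "assemble" a universal morphism $M(X)\to\iota(\LAlb_A(M(X)))$ into a three-step filtered complex (which would raise gluing/extension issues); it proceeds weight by weight through cones, which is what makes the argument go through integrally.

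Part (b) as you state it is wrong: uniqueness of adjoints does not apply, because $\LAlb_A\circ f^*_{k'}$ and $f^*_{k'}\circ\LAlb_A$ are not both left adjoints of one and the same functor. That would require $\iota$ to commute with a right adjoint $f_{k'*}$ of base change, a different assertion which is not even available for infinite extensions (e.g.\ $k\subset\bC$, precisely the case needed for Theorem \ref{mth}). From $f^*_{k'}\circ\iota\iso\iota\circ f^*_{k'}$ (\cite{vol}, Prop.\ 3.3) adjunction only yields a comparison morphism $\LAlb_A\circ f_{k'}\to f_{k'}\circ\LAlb_A$, and one must then prove it is an isomorphism; the paper does this by checking on motives of smooth projective varieties using the explicit weight-graded description of $\LAlb_A(M(X))$ (Remark \ref{albsmpr}), whose pieces $[\bZ[\pi_0(X)(\ok)]\to 0]$, $[0\to{\bf Alb}(X)][1]$ and $NS^*_{X_{\ok}}(1)[2]$ are visibly compatible with base change. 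So both the weight $-2$ representability and the isomorphy of the base-change comparison are genuine gaps in your proposal that need to be filled by these arguments.
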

\begin{proof} Working with DG categories (as oppose to merely triangulated ones) makes the proof easier: 
the subcategory of $D\cM^{eff}_{\gm, \gal} (k;  A)$ consisting of all objects $V$ for which $\iota^*(V) $ is quasi-representable is pretriangulated and homotopy idempotent complete (because $D^b_{dg}({\cM}_1(k; A))$ has these properties).
 Since the triangulated category  $D\cM^{\eff}_{\gm, \gal} (k;  A)$ coincides with its smallest idempotent complete triangulated subcategory that contains  motives of all smooth projective varieties (\cite{bon2}),
 it suffices to prove that, for every smooth connected projective variety $X$, the DG module  $\iota^*(M(X))$ is quasi-representable. 
 
Let $\pi_0(X)$ be a $0$-dimensional scheme, which is the spectrum of  the integral closure of $k$ in $\cO(X)$ and let $X \to \pi_0(X)$ be the canonical morphism.
The DG module  $\iota^*(M(\pi_0(X)))$ is represented by the lattice $\bZ[\pi_0(X)(\ok)]$ spanned by $\ok$-points   of $\pi_0(X)$. Thus, it is enough to prove representability of $\iota^*(\overline M(X))$, where
$$\overline M(X)= \cone(M(X)\to M(\pi_0(X))[-1].$$
We notice that the DG module $\iota^*(\overline M(X))$ carries every object $\Lambda ^{\cdot} \in D^b_{dg}({\cM}_1(k; A))$ of weight $0$ to an acyclic complex {\it i.e.},  $\iota^*(\overline M(X))$ lies in the image of the embedding
$$ \underleftarrow{D^b_{dg}(\w_{\leq -1}{\cM}_1(k; A))}\mono  \underleftarrow{D^b_{dg}({\cM}_1(k; A))}.$$

Let ${\bf Alb} (X)$ be the Albanse variety of $X$. This is an abelian variety over $k$ characterized by the following universal property:  for every abelian variety $Y$ over $k$, we have a functorial isomorphism of $\Gal(\ok/k)$-modules 
\begin{equation}\label{clalb}
\Hom({\bf Alb} (X)_{\ok}, Y_{\ok})\iso \Mor(X_{\ok}, Y_{\ok})/\Mor(\pi_0(X)_\ok, Y_\ok),
\end{equation}
where $\Hom$ denotes the group of morphisms in the category of group schemes over $\ok$ and $\Mor$ denotes the set of morphisms in the category of schemes over $\ok$ with the group structure induced by the group structure on $Y$. 
In particular, $Id\in  \Hom({\bf Alb}(X) , {\bf Alb}(X))$ defines a Galois invariant element of the quotient $$\Mor(X_\ok, {\bf Alb}(X)_\ok)/\Mor(\pi_0(X)_\ok, {\bf Alb}(X)_\ok),$$
which determines  a  morphism
$$\overline M(X) \to \iota([0\to {\bf Alb}(X)])[1].$$
in the category of Galois motives.
By adjunction we get a morphism of DG modules 
$$\alpha: \iota^*(\overline M(X)) \to [0\to {\bf Alb}(X)][1],$$
where we identify an object of $D^b_{dg}({\cM}_1(k; A))$ with its image under the Yoneda embedding.
It suffices to check that $\cone \alpha$ is quasi-representable. Using Propositions \ref{comp}  and \ref{compgm} together with the universal property (\ref{clalb}) it follows that $\cone \alpha$ carries every object 
$M^{\cdot} \in D^b_{dg}({\cM}_1(k; A))$ whose weights are $> -2$ to $0$  {\it i.e.},
$\cone \alpha$ lies in the image of the embedding
$$ \underleftarrow{D^b_{dg}(w_{\leq -2}{\cM}_1(k; A))}\mono  \underleftarrow{D^b_{dg}({\cM}_1(k; A))}.$$

We shall construct a complex  $NS_{X_{\ok}}^*(1)[3]\in D^b_{dg}(w_{\leq -2}{\cM}_1(k; A))$ that represents $\cone \alpha$ explicitly. Let $NS_{X_{\ok}}$ be the N\'eron-Severi  group of   $X_\ok$. This is a finitely generated abelian group
equipped with an action of $\Gal(\ok/k)$. Write $NS_{X_{\ok}}$ as the cokernel of two $k$-lattices:
$$0\to \Lambda_1\to  \Lambda_2 \to NS_{X_{\ok}}\to 0$$
and let $NS_{X_{\ok}}^*(1)$ be the complex of 1-motives
 $$[0\to  \Lambda^*_2(1)] \to [0\to  \Lambda^*_1(1)]$$ 
where  $\Lambda^*_i(1)$ is the torus whose characters lattice is  $\Lambda_i$. Then,  $NS_{X_{\ok}}^*(1)$ viewed as an object of  $D^b_{dg}(w_{\leq -2}{\cM}_1(k; A))$ does not depend on the choice of the presentation because
for any such choice $NS_{X_{\ok}}^*(1)$ represents the following functor:
$$\Hom_{D^b_{dg}({\cM}_1(k; A))}(NS_{X_{\ok}}^*(1), \Xi(1))\iso R\Gamma_{\et}(\spec k, NS_{X_{\ok}}\otimes \Xi) \otimes A,$$
where $\Xi$ is a $k$-lattice.  
On the other hand, using Propositions \ref{compgm} and  \ref{comp},
it follows that the DG module $\cone \alpha$ carries 1-motive $\Xi(1)$ to 
$$R\Gamma_{\et}(\spec k,   \cone(Pic^0({\bf Alb}(X_\ok))\to  Pic(X_\ok))\otimes \Xi)\otimes A[-3].$$
Since $Pic^0({\bf Alb}(X_\ok))\iso  Pic^0(X_\ok)$, we get an isomorphism $\cone \alpha \iso NS_{X_{\ok}}^*(1)[3]$, which completes the proof of part (a) of the Theorem.

\begin{rem}\label{albsmpr} The above proof shows that for a smooth projective variety $X$ we have a functorial quasi-isomorphisms
$$\on Gr^{-2}_W \LAlb_A(X)\iso  NS_{X_{\ok}}^*(1)[2],$$
$$\on Gr^{-1}_W \LAlb_A(X)\iso  [0\to {\bf Alb}(X)][1],$$
$$\on Gr^{0}_W \LAlb_A(X)\iso  [\bZ[\pi_0(X)(\ok)] \to 0].$$
\end{rem}
\begin{rem}\label{integralincharp}  Let $\cP \subset  D\cM^{\eff}_{\gm, \gal} (k;  A)$ be the full
 DG subcategory consisting of those motives that are isomorphic in the homotopy category $DM^{\eff}_{\gm, \gal} (k;  A)$ to direct summands of motives of smooth projective varieties and let $\cP^{\pretr}\mono  D\cM^{\eff}_{\gm, \gal} (k;  A) $ be
its pretriangulated completion. Then, even   if $\Char p$ is not invertible in $A$,  the above proof shows, for every $V\in  \cP^{\pretr}$, the functor $\iota^*(V) $ is quasi-representable. This defines a quasi-functor
$$ \LAlb_A: \cP^{\pretr} \to D^b_{dg}({\cM}_1(k; A)).$$
\end{rem}

To prove part (b) of the Theorem recall from (\cite{vol}, Prop. 3.3) that the quasi-functor $\iota$ commutes with the an arbitrary base change $k\subset k'$. Thus, by adjunction, we get a morphism of quasi-functors
$$ \LAlb_A\circ f_{k'}\to f_{k'}\circ \LAlb_A.$$
To prove that this morphism is an isomorphism it suffices to show that for every smooth projective variety $X$ over $k$ the induced morphism
 $$\on \Gr^{i}_W  \LAlb_A(M(X_{k'}))\to f_{k'}(\on Gr^{i}_W  \LAlb_A(M(X))),$$
 which follows from Remark (\ref{albsmpr}).

\end{proof}

\subsection{The Deligne conjecture.}\label{s.deligne.conj.} 
 \begin{Th}\label{mth} For $k\subset \bC$, 
we have a commutative diagram of quasi-functors
\begin{equation}\label{diagmaindc}
\def\normalbaselines{\baselineskip20pt
\lineskip3pt  \lineskiplimit3pt}
\def\mapright#1{\smash{
\mathop{\to}\limits^{#1}}}
\def\mapdown#1{\Big\downarrow\rlap
{$\vcenter{\hbox{$\scriptstyle#1$}}$}}
\begin{matrix}
D\cM^{\eff}_{\gm}( k ; A)  & \rar{\LAlb_A}  &D^b_{dg}(\cM_1(k, A))  \cr
  \mapdown{\hat R^{Hodge}_A } &   &\mapdown{\T^{Hodge}_A }  \cr
 D^b_{dg}(MHS_{\eff}^{A, \en})   &\rar{\overline{\LAlb}_A } &  D^b_{dg}( MHS_1^{A,\en}).
\end{matrix}
 \end{equation}

\end{Th}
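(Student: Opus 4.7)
Both $\LAlb_A$ and $\hat R^{Hodge}_A$ commute with base change along $f^*_{\bC}$: the former by Theorem \ref{th2}(b), the latter by the very definition (\ref{hodgemaineff}). One therefore reduces to the case $k=\bC$, in which $\T^{Hodge}_A\colon D^b_{dg}(\cM_1(\bC,A))\iso D^b_{dg}(MHS^A_1)$ is a quasi-equivalence. The strategy is to construct a natural transformation
$$\eta\colon \overline{\LAlb}_A\circ \hat R^{Hodge}_A\longrightarrow \T^{Hodge}_A\circ \LAlb_A$$
by adjunction and then to check that it is a quasi-isomorphism on the heart of the Chow weight structure, namely on smooth projective motives.

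The construction of $\eta$ hinges on the preliminary compatibility
\begin{equation}\label{plancomp}
\hat R^{Hodge}_A\circ \iota \;\simeq\; i\circ \T^{Hodge}_A\colon D^b_{dg}(\cM_1(\bC,A))\to D_{dg}^b(MHS_{\eff}^{A,\en}),
\end{equation}
where $i$ is the embedding of Proposition \ref{halb}; informally, the enriched Hodge realization of a 1-motive equals its ordinary Hodge realization endowed with the weight filtration inherited from the filtration $W_\cdot$ on $\cM_1(\bC,A)$. The DG subcategory $\w_{=i}\cM_1(\bC,A)$ for $i=0,-1,-2$ generates both sides under pretriangulated closure, and on it (\ref{plancomp}) is the tautological identification of pure Hodge structures attached to a lattice, an abelian variety, or a torus. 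Proposition \ref{propoftrunc} then upgrades the resulting object-wise match into a genuine isomorphism of DG quasi-functors, since $\w_{=i}\cM_1(\bC,A)$ is a negative DG category. Composing the adjunction unit $V\to \iota(\LAlb_A(V))$ of Theorem \ref{th2} with $\hat R^{Hodge}_A$, transporting via (\ref{plancomp}), and invoking the adjunction $\overline{\LAlb}_A\dashv i$ of Proposition \ref{halb} yields $\eta$.

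To prove that $\eta$ is a quasi-isomorphism it suffices, since $\cP^{\pretr}\iso D\cM^{\eff}_{\gm}(\bC,A)$, to verify it on a smooth projective variety $X$, as a natural transformation between triangulated quasi-functors which is an isomorphism on a generating set is an isomorphism everywhere. By formula (\ref{smproj}) the weight-graded pieces of $\hat R^{Hodge}_A(M(X))$ are the pure Hodge structures of $H_i(X,A)$; the explicit description of $\overline{\LAlb}_A$ on pure Hodge structures given in the proof of Proposition \ref{halb} then extracts three contributions: the lattice $H_0(X,A)$ in weight $0$, the torsion-free part of $H_1(X,A)$ in weight $-1$, and the maximal Hodge-Tate quotient of $H_2(X,A)(1)$ in weight $-2$. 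By Remark \ref{albsmpr}, the weight-graded pieces of $\T^{Hodge}_A\circ \LAlb_A(M(X))$ are respectively $\bZ[\pi_0(X)(\bC)]$, the Hodge structure of $\mathbf{Alb}(X)$, and $NS^*_{X_\bC}\otimes A(1)[2]$. The matches in weights $0$ and $-1$ follow at once from the universal properties of $\pi_0$ and of the Albanese variety; the match in weight $-2$ is precisely the content of the Lefschetz $(1,1)$ Theorem as packaged in Lemma \ref{Le}.

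The main obstacle I expect is promoting the compatibility (\ref{plancomp}) from a pointwise identification on the three negative subcategories $\w_{=i}\cM_1(\bC,A)$ to a coherent natural isomorphism of DG quasi-functors on the whole derived DG category $D^b_{dg}(\cM_1(\bC,A))$. Once this is accomplished via the truncation formalism of \S\ref{s.trunc}, what remains is the weight-by-weight bookkeeping sketched above, with Lemma \ref{Le} doing the essential Hodge-theoretic work in the weight $-2$ stratum.
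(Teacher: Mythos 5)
There is a genuine gap at the heart of your plan: the ``preliminary compatibility'' $\hat R^{Hodge}_A\circ\iota\simeq i\circ\T^{Hodge}_A$ that everything else hangs on. First, this composition is not even defined: $\hat R^{Hodge}_A$ is constructed only on $D\cM^{\eff}_{\gm}(\bC;A)$, via the Chow weight structure on the \emph{geometric} category, while $\iota$ takes values in $D\cM^{\eff}_{\gal}(\bC;A)$ and the paper explicitly states it is unknown whether $\iota$ factors through $D\cM^{\eff}_{\gm,\gal}$. Second, even where it would make sense, your proposed derivation --- identify the two sides object-wise on the pure subcategories $\w_{=i}\cM_1(\bC,A)$ and then ``upgrade'' by negativity and Proposition \ref{propoftrunc} --- does not work: an object-wise match on $\cQ$ does not produce an isomorphism of quasi-functors; one must produce a map of functors and check compatibility with all (derived) morphisms, in particular with the extensions mixing different weights, and that compatibility of the \emph{integral} weight filtrations is exactly the substance of the theorem, not something one gets for free. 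In effect your $\eta$ is built out of an unproved statement essentially equivalent to the result you are proving.

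The paper is organized precisely to avoid this. Only the non-enriched compatibility $R^{Hodge}_A\circ\iota\simeq\T^{Hodge}_A$ (\cite{vol}, Th.\ 3) is used as input. Both compositions around the square are encoded, via the adjunctions $\LAlb_A\dashv\iota$ and $\overline{\LAlb}_A\dashv i$, as bimodules $L_1, L_2$ over the negative DG category $\cR=\cP^{op}\otimes^L\cQ$ ($\cP$ the Chow weight structure, $\cQ$ the shifts of pure 1-motives), and both are mapped to a third bimodule $L_3$ valued in ordinary mixed Hodge structures; Lemma \ref{posiclm} shows that after applying $\tau_{\leq 0}$ over $\cR$ the map $L_2\to L_3$ becomes a quasi-isomorphism, so the enriched information is recovered from the non-enriched one by truncation rather than by an a priori enriched compatibility. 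Your reduction to $k=\bC$, the restriction to smooth projective generators, and the weight-by-weight verification (point in weight $0$, the classical Albanese exact sequence in weight $-1$, Lefschetz $(1,1)$ via (\ref{c.tr})/Lemma \ref{Le} in weight $-2$) do match the paper's final step, but as written they check only that the graded pieces are abstractly isomorphic, whereas one must check that the specific comparison map induces these isomorphisms; this is where the paper's explicit truncated morphisms $\tau_{\leq 0}v$, $\tau_{\leq 0}u$ do the work.
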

\begin{proof}
We shall start by recalling a general Homological Algebra construction.
Let $\cR$ be a negative DG category over a commutative ring $A$ (see \S \ref{s.trunc}),
and let $\cT(\cR, D_{dg}(Mod_A))$ be the triangulated category of left DG modules  over $\cR$ (\S \ref{s.dgcat}).
Denote by $\cT(\cR, D_{dg}(Mod_A))^{\leq 0}$ (resp.  $\cT(\cR, D_{dg}(Mod_A))^{\geq 0}$)   
the full subcategory of $\cT(\cR, D_{dg}(Mod_A))$ formed by those DG functors $L$ that carry every object $X\in \cR$ to a complex $L(X)$  acyclic in positive degrees (resp. in negative degrees).
Then, by Remark \ref{tstructurebis}  the subcategories 
$$\cT(\cR, D_{dg}(Mod_A))^{\leq 0}, \cT(\cR, D_{dg}(Mod_A))^{\geq 0} \subset \cT(\cR, D_{dg}(Mod_A))$$ 
define a $t$-structure on $\cT(\cR, D_{dg}(Mod_A))$. In particular,  the embedding $\cT(\cR, D_{dg}(Mod_A))^{\leq 0} \mono \cT(\cR, D_{dg}(Mod_A))$
admits a right adjoint functor
 \begin{equation}\label{gen3}
\tau_{\leq 0}: \cT(\cR, D_{dg}(Mod_A)) \to \cT(\cR, D_{dg}(Mod_A))^{\leq 0}. 
 \end{equation}

 Now we come back to the proof of Theorem \ref{mth}.
 Since the motivic Albanese functor commutes with the base change, it suffices to construct diagram (\ref{diagmaindc}) for $k=\bC$.
 We need to show that the following two left DG modules over  $D\cM^{\eff}_{\gm}(\bC ; A)^{op} \otimes ^L  D^b_{dg}({\cM}_1(\bC; A))$ 
   \begin{equation}\label{gen1}
          V\otimes M \quad \mapsto \quad  \Hom_{D\cM^{\eff}(\bC ; A)}(V, \iota(M))
 \end{equation}          
   \begin{equation}\label{gen2}
          V\otimes M \quad \mapsto \quad  \Hom_{D^b_{dg}( MHS^{A,\en})}(\hat R^{Hodge}_A(V), \T^{Hodge}_A(M))
 \end{equation}   
  are quasi-isomorphic.
  Consider the full subcategory $\cQ$ of $D^b_{dg}({\cM}_1(\bC; A))$ formed by those objects
that are isomorphic in the triangulated category $D^b({\cM}_1(\bC; A))$ to an object of the form $M[i]$, $i=0,-1, -2$, where $M$ is  a pure 1-motive of weight $-i$.  The category $D^b_{dg}({\cM}_1(\bC; A))$ is generated by $\cQ$ and, using  
Proposition \ref{comp}, it follows that $\cQ$ is negative. Let $\cP\subset D\cM^{\eff}_{\gm}(\bC ; A)$ be the Chow weight structure, and
let $L_1$ and $L_2$ be the restriction of DG modules  (\ref{gen1}) and  (\ref{gen2}) to $\cR: = \cP^{op}\otimes^L \cQ$. 
Then, since the embedding  
$$\cR \mono D\cM^{\eff}_{\gm}(\bC ; A)^{op} \otimes^L  D^b_{dg}({\cM}_1(\bC; A))$$
yields a quasi-equivalence of the corresponding pretriangulated  completions, it suffices to prove that  $L_1$ and $L_2$ are quasi-isomorphic.
Consider a third DG module over $\cR$
 \begin{equation}\label{gen4}
 V\otimes M \quad \mapsto \quad  \Hom_{D^b_{dg}( MHS^{A})}(R^{Hodge}_A(V), \T^{Hodge}_A(M)),
    \end{equation}        
   where we write, by abuse of notation, $\T^{Hodge}_A(M)$ both for an object of  $D^b_{dg}( MHS^{A, \en})$ and for its image in the derived
   DG category $D^b_{dg}( MHS^{A})$ of Deligne's mixed Hodge structures.
   According to (\cite{vol}, Theorem 3) we have an isomorphism of DG functors
   $$R^{Hodge}_A \circ \iota \iso \T^{Hodge}_A: D^b_{dg}({\cM}_1(\bC; A)) \to D^b_{dg}( MHS^{A}).$$
   Using this isomorphism we get a morphism of DG modules
   $$v: L_1\to L_3 $$
   $$ L_1(V\otimes M) \to \Hom_{D^b( MHS^{A})}(R^{Hodge}_A(V), R^{Hodge}_A \circ \iota (M))\iso L_3(V\otimes M).$$
   We also have a morphism
   $$u: L_2 \to L_3$$
   induced by the functor $D^b( MHS^{A, \en})\to D^b( MHS^{A})$.
   Next,  we apply the canonical truncation (\ref{gen3}) to the DG modules $L_i$ over the negative category  $\cR$.   Consider morphisms    
     \begin{equation}\label{gen5}
     \tau_{\leq 0}v: \tau_{\leq 0}L_1\to \tau_{\leq 0}L_3 
     \end{equation}
     $$\tau_{\leq 0}u: \tau_{\leq 0}L_2\to \tau_{\leq 0}L_3 $$
  of truncated DG modules induced by  $v$ and $u$.
We claim that $\tau_{\leq 0}u$ as well as the canonical morphisms $\tau_{\leq 0}L_2\to L_2$ and $\tau_{\leq 0}L_1\to L_1$ are quasi-isomorphisms.
  The first two assertions follow from the fact that, for $V\in \cP$ and $M\in \cQ$, both $\T^{Hodge}_A(M), \hat R^{Hodge}_A(V) \in D^b( MHS^{A, \en})$ are isomorphic  
   to objects of the form $\oplus_i M_i[i]$,  where $M_i$ is  a pure Hodge structure of weight $-i$, and  Lemma \ref{posiclm}. The last assertion is trivial. Finally, we have to check that $\tau_{\leq 0}v$ is an isomorphism.
   This is obvious when $V$ is the motive of a point. Thus, it suffices to prove the statement for the reduced motives of smooth projective varieties $\overline M(X)= \cone(M(X)\to M(spec \, \bC))[-1]$. If $M$ is a 1-motive  of weight $0$ both sides of 
 (\ref{gen5}). For $M=[0\to Y][1]$, where $Y$ is an abelian variety, 
 the assertion amounts the classical statement that the sequence
 $$0\to \Mor(spec, \bC, Y)\otimes A \to \Mor(X, Y)\otimes A \to \Hom_{MHS^A}(H_1(X, \bZ), H_1(Y, \bZ))\to 0$$
  is exact. Finally, for $M=[0\to \bG_m][2]$ the assertion is proven in formula (\ref{c.tr})  from \S \ref{s.del.coh.}. 
\end{proof}
\begin{rem}\label{clasdelconj} Let us explain how the assertion of Theorem \ref{mth} is related to the ``classical'' Deligne conjecture on 1-motives, proven in \cite{brs} and in \cite{r}. 
This conjecture asserts that, for every variety $X$ over $k\subset \bC$ and an
integer $m\geq 0$, there exists 1-motive $L_mAlb_\bZ(X)$ whose Hodge realization is isomorphic to the maximal quotient of the mixed Hodge structure $H_m(X_\bC, \bZ)$ which belongs to $MHS_1^{\bZ}$. 
Consider the complex $LAlb_\bZ(M(X)) \in D^b_{dg}({\cM}_1(k; \bZ))$. Although the category ${\cM}_1(k; \bZ)$ is not abelian, every morphism in this category has both a kernel and a cokernel. In particular, for every complex of 1-motives $M^\cdot$
one can define its cohomology $H^*(M^\cdot)\in MHS_1^{\bZ}$ by the usual formula\footnote{Warning: $H^0: D^b({\cM}_1(k; \bZ)) \to {\cM}_1(k; \bZ)$ is not a cohomological functor (\cite{w}, Def. 10.2.7).}. 
Using  Theorem \ref{mth}, it follows that the 1-motive
  $$L_mAlb_\bZ(X):= H^{-m} (LAlb_\bZ(M(X)))$$ 
satisfies the required
property. 
\end{rem}

 \bigskip

\end{document}